\newcommand{\email}[1]{\emph{E-mail:} \href{mailto:{#1}}{#1}}
\newcommand{\R}{{\mathbb R}}
\newcommand{\be}[1]{\begin{equation}\label{#1}}
\newcommand{\ee}{\end{equation}}
\renewcommand{\(}{\left(}
\renewcommand{\)}{\right)}
\newcommand{\irh}[1]{\int_{\R^d}{#1}\,dv}
\newcommand{\irk}[1]{\int_{\R}{#1}\,dv}
\newcommand\uu{{\mathbf u}}
\newcommand\uuu{{\mathbf 0}}
\newcommand\vv{{\mathbf v}}
\newcommand{\scalar}[2]{\langle{#1},{#2}\rangle}
\begin{document}

%%%%%%%%%%%%%%%%%%%%%%%%%%%%%%%%%%%%%%%%%%%%%%%%%%%%%%%%%%%%%%%%%%%%%%
%%%%%%%%%%%%%%%%%%%%%%%%%%%%%%%%%%%%%%%%%%%%%%%%%%%%%%%%%%%%%%%%%%%%%%
\title{PHASE TRANSITION AND ASYMPTOTIC BEHAVIOUR OF
FLOCKING CUCKER-SMALE MODEL}

\author{Xingyu Li\thanks{ CEREMADE (CNRS UMR n$^\circ$ 7534), PSL university, Universit\'e Paris-Dauphine, Place de Lattre de Tassigny, 75775 Paris 16, France. \email{li@ceremade.dauphine.fr} %\newline \emph{URL:} \url{https://www.ceremade.dauphine.fr/fr/membres/detail-cv/profile/xingyu-li.html}
}}\pagestyle{myheadings}\markboth{Phase transition and asymptotic behaviour}{Xingyu Li}
\maketitle\thispagestyle{empty}

\begin{abstract} In this paper, we study a continuous flocking Cucker-Smale model with noise, which
has isotropic and polarized stationary solutions depending on the intensity of the noise. The first result
establishes the threshold value of the noise parameter which drives the phase transition. This threshold
value is used to classify all stationary solutions and their linear stability properties. Using an entropy,
these stability properties are extended to the non-linear regime. The second result is concerned with
the asymptotic behaviour of the solutions of the evolution problem. In several cases, we prove that
stable solutions attract the other solutions with an optimal exponential rate of convergence determined
by the spectral gap of the linearized problem around the stable solutions. The spectral gap has to be
computed in a norm adapted to the non-local term.
\end{abstract}

\begin{keywords} Flocking model; phase transition; symmetry breaking; stability; large time asymptotics; free energy; spectral gap; asymptotic rate of convergence\end{keywords}
\begin{AMS} 35B40; 35P15; 35Q92.\end{AMS}
% 35B40 Asymptotic behaviour of solutions
% 35P15 Estimation of eigenvalues, upper and lower bounds
% 35Q92 PDEs in connection with biology and other natural sciences

%%%%%%%%%%%%%%%%%%%%%%%%%%%%%%%%%%%%%%%%%%%%%%%%%%%%%%%%%%%%%%%%%%%%%%
%%%%%%%%%%%%%%%%%%%%%%%%%%%%%%%%%%%%%%%%%%%%%%%%%%%%%%%%%%%%%%%%%%%%%%
\section{Introduction}

In many fields such as biology, ecology or economic studies, emerging collective behaviours and self-organization in multiagent interactions have attracted the attention of many researchers. In this paper we consider the McKean-Vlasov model in order to describe \emph{flocking} {\color{blue}(see \cite{D,F} for more information)}. The original model of~\cite{MR2324245} is Cucker-Smale model, which describes a population of $N$ birds moving in $\R^3$ by the equations
\[
v_i(t_n+\Delta t)-v_i(t_n)=\frac{\lambda\,\Delta t}{N}\sum_{i=1}^{N}a_{ij}\big(v_j(t_n)-v_i(t_n)\big)\,,\quad i=1\,,\;2\ldots\;N
\]
at discrete times $t_n=n\Delta t$ with $n\in\mathbb{N}$ and $\Delta t>0$. Here $v_i$ is the velocity of the $i$th bird, the model is homogeneous in the sense that there is no position variable, and the coefficients $a_{ij}$ model the interaction between pairs of birds as a function of their relative velocities, while $\lambda$ is an overall coupling parameter. The authors proved that under certain conditions on the parameters, the solution converges to a state in which all birds fly with the same velocity. Another model is the Vicsek model~\cite{BF,G,PhysRevLett.75.1226} which was derived earlier to study the evolution of a population in which individuals have a given speed but the direction of their velocity evolves according to a diffusion equation with a local alignment term. This model exhibits phase transitions. In~\cite{MR3067586,MR3305654,MR2914250,MR3180036}, phase transition has been shown in a continuous version of the model: with high noise, the system is disordered and the average velocity is zero, while for low noise a direction is selected. 

Here we consider a model on $\R^d$, $d\ge 1$ with noise as in~\cite{MR2401690,MR2860672}. {In \cite{MR2860672}, the authors mainly study the noise at the level of individual-based models, which is about the collective behavior of animals. And in \cite{MR2401690}, the authors mainly study the model that every individual adjusts its velocity by adding to it a weighted average of the differences of its velocity with those of the other birds, and it is Brownian white noise. In our model,} the population is described by a distribution function $f(v,t)$ in which the interaction occurs through a mean-field nonlinearity known as \emph{local velocity consensus} and we also equip the individuals with a so-called \emph{self-propulsion} mechanism which privileges a speed (without a privileged direction) but does not impose a single value to the speed as in the Vicsek model. The distribution function solves
\be{fl}
\frac{\partial f}{\partial t}=D\,\Delta f+\nabla\cdot\Big((v-\uu_f)\,f+\alpha\,v\(|v|^2-1\)f\Big)\,,\quad f(.,0)=f_{\rm{in}}>0
\ee
where $t\ge 0$ denotes the time variable and $v\in\R^d$ is the velocity variable. Here $\nabla$ and $\Delta$ are the gradient and the Laplacian with respect to $v$ respectively. The parameter $D>0$ measures the intensity of the noise, $\alpha>0$ is the parameter of self-propulsion which tends to force the distribution to be centered on velocities $|v|$ of the order of $1$ when $\alpha$ becomes large, and
\[
\uu_f(t)=\frac{\irh{v\,f(t,v)}}{\irh{f(t,v)}}
\]
is the \emph{mean velocity}. We refer to~\cite{MR3199779} for more details. Notice that~\eqref{fl} is one-homogeneous: from now on, we will assume that the mass satisfies $\irh{f(t,v)}=1$ for any $t\ge0$, without loss of generality. In~\eqref{fl}, the velocity consensus term $v-\uu_f$ can be interpreted as a friction force which tends to align $v$ and $\uu_f$. Altogether, individuals are driven to a velocity corresponding to a speed of order $1$ and a direction given by~$\uu_f$, but this mechanism is balanced by the noise which pushes the system towards an isotropic distribution with zero average velocity. The Vicsek model can be obtained as a limit case in which we let $\alpha\to+\infty$: see~\cite{MR3109433}. The competition between the two mechanisms, relaxation towards a non-zero average velocity and noise, is responsible for a phase transition between an ordered state for small values of $D$, with a distribution function $f$ centered around $\uu$ with $\uu\neq\uuu$, and a disordered, \emph{symmetric} state with $\uu=\uuu$. This phase transition can also be interpreted as a \emph{symmetry breaking} mechanism from the \emph{isotropic distribution} to an ordered, asymmetric or \emph{polarized distribution}, with the remarkable feature that nothing but the initial datum determines the direction of $\uu_f$ for large values of $t$ and any stationary solution generates a continuum of stationary solutions by rotation. We refer to~\cite{MR3180036} for more detailed comments and additional references on related models.

So far, the flocking behaviour has been studied in \cite{CFR} for a Boltzmann-type equation, which describes the large time behaviour by grazing collision limit. 
In~\cite{MR3180036}, J.Tuguat proved the result of phase transitions for a particular class of McKean-Vlasov diffusions, where the dimensioin $d=1$ and the potential is more general. And recently in \cite{C}, the authors studied phase transitions for Mckean-Vlasov equation on the torus. Moreover, it has been proved in~\cite{MR3541988} by A.~Barbaro, J.~Canizo, J.~Carrillo and P.~Degond that stationary solutions are isotropic for large values of $D$ while symmetry breaking occurs as $D\to0$. The bifurcation diagram showing the phase transition has also been studied numerically in~\cite{MR3541988} and the phase diagram can be found in~\cite[Theorem~2.1]{MR3180036}.
\subsection{Main results.}The first purpose of this paper is to classify all stable and unstable stationary solutions and establish a complete description of the phase transition.  The figure of \cite{MR3541988} shows the existence of the critical
point. The proof can be found in Section \ref{Sec:Thm1.1completed}.

%---------------------------------------------------------------------
\begin{thm}\label{Thm:Main1} Let $d\ge 1$ and $\alpha>0$. There exists a critical intensity of the noise $D_*>0$ such that
\begin{enumerate}
\item[(i)] if $D\ge D_*$ there exists one and only one non-negative stationary distribution which is isotropic and stable,

 \item[(iI)]if $d=1$ and $D <D_*$, there exist one and only one non-negative isotropic station-
ary distribution which is unstable, and two stable non-negative non-symmetric stationary distributions, and the two non-symmetric stationary solutions are
the same after the reflection about the vertical axis.
\item[(iIi)] if $d\ge 2$ and $D<D_*$, there exist one and only one non-negative isotropic stationary distribution which is instable, and a continuum of stable non-negative non-symmetric stationary distributions, but this non-symmetric stationary solution is unique up to a rotation.
\end{enumerate}\end{thm}
%---------------------------------------------------------------------

The stability of the stationary solutions is about the critical point of the free energy
defined in \eqref{free}. Under the assumption of mass normalization to $1$, it is straightforward to observe that any stationary solution can be written as
\be{stsol}
f_\uu(v)=\frac{e^{-\frac1D\,\(\frac12\,|v-\uu|^2+\tfrac\alpha4\,|v|^4-\tfrac\alpha2\,|v|^2\)}}{\irh{e^{-\frac1D\,\(\frac12\,|v-\uu|^2+\tfrac\alpha4\,|v|^4-\tfrac\alpha2\,|v|^2\)}}}
\ee
where $\uu=(u_1,..u_d)\in\R^d$ solves $\irh{(\uu-v)\,f_\uu(v)}=0$. Up to a rotation, we can assume that $\uu=(u,0,...0)=u\,e_1$ and the question of finding stationary solutions to~\eqref{fl} is reduced to solve $u\in\R$ that satisfies the equation
\be{con1}
\mathcal H(u):=\irh{(v_1-u)\,e^{-\frac1D(\phi_\alpha(v)-u\,v_1)}}=0,\quad\mbox{where}\quad\phi_\alpha(v):=\tfrac\alpha4\,|v|^4+\tfrac{1-\alpha}2\,|v|^2\,
\ee

Here $\phi_{\alpha}(v)$ is the environmental potential that the dynamics is evolving. Obviously $u=0$ is always a solution. Moreover, if $u$ is a solution of $\eqref{con1}$, then $-\,u$ is also a solution. As a consequence, from now on, we always suppose that $u\ge 0$. Taking $\mathbf u= u\,e_1$ is a key but straightforward idea in case of stationary solution, which however does not adapt so easily to non-stationary solutions of the evolution problem.  Theorem~\ref{Thm:Main1} is proved in Section~\ref{Sec:PhaseTransition} by analyzing~\eqref{con1}. We will see that for dimension $d\ge 2$,
\eqref{con1} is a rotation about the vertical axis of the one-dimensional potential, so there is a
continuum of the stationary solutions that are unique by rotation.

The second purpose of this paper is to study the stability of the stationary states and the rates of convergence of the solutions of the evolution problem. A key tool is the \emph{free energy}
\be{free}
\mathcal F[f]:=D\irh{f\,\log f}+\irh{f\,\phi_\alpha}-\frac12\,|\uu_f|^2
\ee
and we shall also consider the \emph{relative entropy} with respect to $f_\uu$ defined as
\[
\mathcal F[f]-\mathcal F[f_\uu]=D\irh{f\,\log\(\frac f{f_\uu}\)}-\frac12\,|\uu_f-\uu|^2
\]
where $f_\uu$ is a stationary solution to be determined. Notice that $f_\uu$ is a critical point of ~$\mathcal F$ under the mass constraint. We say $f_{\uu}$ is stable, if and only if fu is the critical point
of $\mathcal F$. For $D >D_*$, the only stationary solution $f_{\uuu}$ is stable, and it is the local minimizer
of $\mathcal F$. For $0<D <D_*$, the local minimizers of $\mathcal F$ are $f_{\uu}$ with $\uu\ne\uuu$, and they are stable. See Section~\ref{Sec:F} for more details.

To a distribution function $f$, we associate the \emph{non-equilibrium Gibbs state}
\be{Gibbs}
G_f(v):=\frac{e^{-\frac1D\,\(\frac12\,|v-\uu_f|^2+\tfrac\alpha4\,|v|^4-\tfrac\alpha2\,|v|^2\)}}{\irh{e^{-\frac1D\,\(\frac12\,|v-\uu_f|^2+\tfrac\alpha4\,|v|^4-\tfrac\alpha2\,|v|^2\)}}}\,.
\ee
Unless $f$ is a stationary solution of~\eqref{fl}, let us notice that $G_f$ does not solve~\eqref{fl}. A crucial observation is that
\[
\mathcal F[f]=D\irh{f\,\log f}+\frac12\irh{|v-\uu_f|^2\,f}+\irh{\(\frac\alpha4\,|v|^4-\frac\alpha2\,|v|^2\)f}
\]
is a Lyapunov function in the sense that
\[
\frac d{dt}\mathcal F[f(t,\cdot)]=-\,\mathcal I[f(t,\cdot)]
\]
if $f$ solves~\eqref{fl}, where $\mathcal I[f]$ is the \emph{relative Fisher information} of $f$ defined as
\be{fisher}
\mathcal I[f]:=\irh{\left|D\,\frac{\nabla f}f+\alpha\,v\,|v|^2+(1-\alpha)\,v-\uu_f\,\right|^2f}=D^2\irh{\left|\nabla\log\(\frac f{G_f}\)\right|^2f}\,.
\ee
It is indeed clear that $\mathcal F[f(t,\cdot)]$ is monotone non-increasing and $\frac d{dt}\mathcal F[f(t,\cdot)]=0$ if and only if $f=G_f$ is a stationary solution of~\eqref{fl}. This is consistant with our first stability result, which can be directly deduced from Lemma \ref{Lem:Stab}.
%---------------------------------------------------------------------
\begin{prop}\label{Prop:stability} For any $d\ge1$ and any $\alpha>0$, $f_\uuu$ is a linearly stable critical point if and only if $D>D_*$.\end{prop}
%---------------------------------------------------------------------

Actually, from the dynamical point of view, we have a better, global result.
%---------------------------------------------------------------------
\begin{thm}\label{Thm:Main2} For any $d\ge1$ and any $\alpha>0$, if $D>D_*$, then for any solution $f$ of~\eqref{fl} with nonnegative initial datum $f_{\rm{in}}$ of mass $1$ such that $\mathcal F[f_{\rm{in}}]<\infty$, there are two positive constants $C$ and $\lambda$ such that, for any time $t>0$,
\be{ExpCV}
0\le\mathcal F[f(t,\cdot)]-\mathcal F[f_\uuu]\le C\,e^{-\lambda\,t}\,.
\ee
\end{thm}
%---------------------------------------------------------------------
For general $D >0$ and radially symmetric initial data, \eqref{ExpCV} is proved in Proposition \ref{Prop:ExpCV}.
When $D >D_*$ and the initial data is more general, the proof can be found in Section \ref{Sec:Isotropic}.
Moreover, we shall also prove that
\[
\irh{|f(t,\cdot)-f_\uuu|^2\,f_\uuu^{-1}}\le C\,e^{-\lambda\,t}
\]
with same $\lambda>0$ as in Theorem~\ref{Thm:Main2}, but eventually for a different value of $C$, and characterize $\lambda$ as the spectral gap of the linearized evolution operator in an appropriate norm. A characterization of the optimal rate $\lambda$ is given in~Theorem~\ref{Thm:Main2Refined}.

Moreover, for general $D>0$, besides the convergence of the free energy, we also have
the result about continuity and convergence of the norm of the average velocity, which
is Proposition \ref{Prop:uf}. The similar technique has been also used in Section 2.1 of \cite{BF}. See
Section \ref{sec46} for more details.

For $D<D_*$, the situation is more subtle as it hits the difficult question of the \emph{bassins
of attraction}. The solution of~\eqref{fl} can in principle converge either to the \emph{isotropic stationary solution} $f_\uuu$ or to a \emph{non-symmetric stationary solution} $f_\uu$ with $\uu\neq\uuu$. We will prove that  for nonnegative initial datum $f_{in}$ with mass 1 that satisfies $\mathcal F[f_{in}]<\mathcal F[f_{\uuu}] $, $\mathcal F[f]-\mathcal F[f_{\uu}]$ decays with an exponential rate which is also characterized by a spectral gap in Section~\ref{Sec:Polarized}. In the non-symmetric case, the question of the rate of convergence to a solution with a uniquely defined limiting $\uu$ or a set of polarized solutions is still open.

{\subsection{Motivations and ideas of the proofs.}As we introduced before, the equation \eqref{fl} that we focus on has been studied in \cite{MR3541988} as a simplified model for a pack of birds, and the authors proved that
when $D\to 0$ the system is `ordered' and when $D\to \infty$ the system is `disordered', which means there exists a phase transition. A natural question is, do we have a more explicit result? Is there a value $D_*$ that accurately divides the `ordered' and `disordered' state? We have seen in \cite{MR3541988} that, the system is `ordered' or not depends on the solutions of $\mathcal H(u)$ defined in \eqref{con1}. If \eqref{con1} has no postive solutions, then the sytem is `ordered', otherwise it is `disordered'. Answering the question means giving more accurate results about $\mathcal H(u)$.\\

By analyzing Fig.~\ref{Fig1}, we can find out that when $D$ is large, then $\mathcal H'(0)<0$, and $\mathcal H(u)$ is strictly decreasing on $(0, \infty)$, so it does not have positive solutions. When $D$ is small, then $\mathcal H'(0)>0$, and $\mathcal H(u)$ is first increasing and then decreasing on $(0,\infty)$, which means that $\mathcal H(u)$ has \emph{one and only one} postive solution on $(0,\infty)$. So we can conjecture that there exists $D_*$ that satisfies $\mathcal H'(0)=0$, such that $\mathcal H(u)=0$ has no positive solutions when $D>D_*$, and $\mathcal H(u)=0$ has \emph{one and only one} postive solution when $0<D<D_*$. This means a phase transition exists. In Section~\ref{Sec:PhaseTransition}, we first show that such a $D_*$ exists and is unique, and has the conjecture of properties. The whole proof is basic, just about mathematical analysis, but the case $d\ge 2$ is more complicated.

The next natural question is, can we get more information about large time asymptotics of the system? \cite{MR3541988} has explained that phase transition is directly related to the different possibilities of the stationary solutions defined in \eqref{stsol} as $t\to\infty$. From the results of Section~\ref{Sec:PhaseTransition}, we know that when $D>D_*$, the stationary solution $f_{\uuu}$ is unique, and we should naturally guess that the solution of \eqref{fl} converges to $f_{\uuu}$ in some way, and moreover, we hope that the convergence has an exponential decay.

Our idea goes similarly as in \cite{MR3196188}. We know from  Section \ref{Sec:F} that if $D>D_*$, then $f_{\uuu}$ is the unique mimizer of the free energy $\mathcal F$ and we can also prove the convergence. We study the time deriviative of $\mathcal F$, which is the Fisher information $\mathcal I$. Around $f_{\uuu}$, we consider the quadratic forms associated with the expansion of $\mathcal F$ and $\mathcal I$, and we prove a coercivity result relating the two quadratic forms $Q_1$ and $Q_2$, which is based on a Poincar\'e inequality, see Section~\ref{Sec:Linearization}. The goal of studying the quadratic forms is to prove the result of large time asymptotic behaviour. For the equation \eqref{fl}, we consider the linearzied equation with the linear operator $\mathcal L$. After choosing an adopted Hilbert space with scalar product $\langle, \rangle$, such that $\langle g,g\rangle=Q_1[g]$  and $\langle g,\mathcal L g\rangle=-Q_2[g]$. The coercivity constant $\mathcal P_D$ is the spectral gap of $\mathcal L$. For the non-linear part of \eqref{fl}, we can prove that the difference with the nonlinear expansion is small enough compared to $\langle g,g\rangle$  and $\langle g,\mathcal L g\rangle$, so the result of large time asymptotics is proved by using Gr\"onwall's inequality. Moreover, the sharp exponential rate is just $2\mathcal P_D$. Details can be found in Section~\ref{Sec:Isotropic}.

One can change probably the potential $\phi_{\alpha}(v)$ to a polynomial of higher order in $v$ (of course it should also take negative values of $\phi(\alpha)=0$), and similar results on the phase transitions and large time asymptotic behaviour when the intensity of noise $D$ is large enough, but the phase diagram will be more complicated.

The difficulty of the case $0<D<D_*$ is that the stationary solution is not unique, and there is even a continuum of solutions for $d\ge 2$, so the convergence of the solutions of \eqref{fl} is much more complicated, which leave a number of open questions for further studies.}

\subsection{Structure of the paper.}In Section~\ref{Sec:PhaseTransition}, we classify all stationary solutions, prove Theorem~\ref{Thm:Main1} and deduce that a phase transition occurs at $D=D_*$. Section~\ref{Sec:Linearization} is devoted to the linearization. The relative entropy and the relative Fisher information provide us with two quadratic forms which are related by the linearized evolution operator. The main result here is to prove a spectral gap property for this operator in the appropriate norm, which is inspired by a similar method used in~\cite{MR3196188} to study the sub-critical Keller-Segel model: see Proposition~\ref{Prop:Coercivity}. In Section \ref{Sec:F}, we give the properties
of the free energy, study all terms in the linearization, including the term arising from the non-local mean velocity. The proof of Theorem~\ref{Thm:Main2} follows using a Gr\"onwall type estimate, in Section~\ref{Sec:Isotropic} (isotropic case). In Section~\ref{Sec:Polarized}, we also give some results in the polarized case.

%%%%%%%%%%%%%%%%%%%%%%%%%%%%%%%%%%%%%%%%%%%%%%%%%%%%%%%%%%%%%%%%%%%%%%
%%%%%%%%%%%%%%%%%%%%%%%%%%%%%%%%%%%%%%%%%%%%%%%%%%%%%%%%%%%%%%%%%%%%%%
\section{Stationary solutions and phase transition}\label{Sec:PhaseTransition}

The aim of this section is to classify all stationary solutions of~\eqref{fl} as a first step of the proof of the phase transition result of Theorem~\ref{Thm:Main1}.  For the case $d=1$, We refer to 
\cite{MR3180036}.

%%%%%%%%%%%%%%%%%%%%%%%%%%%%%%%%%%%%%%%%%%%%%%%%%%%%%%%%%%%%%%%%%%%%%%
\subsection{A technical observation}

Recall that $\phi_\alpha(v):=\tfrac\alpha4\,v^4+\tfrac{1-\alpha}2\,v^2\,$ {and define $|\mathbb S^{d-1}|$ as the volume of $d$-dimensional unit sphere $\mathbb S^{d-1}$.} We start by the simple observation that
\[
-\,D\,\frac\partial{\partial v_1}\(e^{-\frac1D(\phi_\alpha(v)-u\,v_1)}\)=\(v_1-u+\alpha\(|v|^2-1\)v_1\)e^{-\frac1D(\phi_\alpha(v)-u\,v_1)}
\]
can be integrated on $\R^d$ to rewrite $\mathcal H$ as
\[
\mathcal H(u)=\alpha\irh{\(1-|v|^2\)v_1\,e^{-\frac1D(\phi_\alpha(v)-u\,v_1)}}
\]
and compute
\[
\mathcal H'(u)=\frac\alpha D\irh{\(1-|v|^2\)v_1^2\,e^{-\frac1D(\phi_\alpha(v)-u\,v_1)}}\,.
\]
We observe that $\mathcal H'(0)=\frac\alpha D\,|\mathbb S^{d-1}|\,h_d(D)$ with the convention that $|\mathbb S^0|=2$, where
\[
h_d(D):=\int_0^\infty{(s^{d+1}-s^{d+3})\,e^{-\frac{\varphi_\alpha(s)}D}\,ds}.
\]
With these notations, we are now in a position to state a key ingredient of the proof.
%---------------------------------------------------------------------
\begin{prop}\label{Prop:hd} For any $d\ge1$ and any $\alpha>0$, $h_d$ has a unique positive root $D_*$. Moreover $h_d$ is positive on $(0,D_*)$ and negative on $(D_*,+\infty)$.\end{prop}
%---------------------------------------------------------------------
\begin{proof} Our goal is to prove that $h_d=j_{d+1}-j_{d+3}$ is positive on $(0,D_*)$ and negative on $(D_*,+\infty)$ for some $D_*>0$, where
\be{def:jd}
j_d(D):=\int_0^\infty{s^d\,e^{-\frac1D\,\varphi_\alpha(s)}\,ds}\,.
\ee
Let us start with two useful identities. A completion of the square shows that for any $n\in \mathbb{N}$,
\be{square}
j_{n+5}-2\,j_{n+3}+j_{n+1}=\int_0^\infty{s^{n+1}\(s^2-1\)^2e^{-\frac{\phi_\alpha}D}\,ds}>0\,.
\ee
With an integration by parts, we obtain that
\be{ipp}
\alpha\,j_{n+5}+(1-\alpha)\,j_{n+3}=\int_0^\infty{s^{n+2}\,\varphi_\alpha'\,e^{-\frac1D\,\varphi_\alpha}\,ds}=(n+2)\,D\,j_{n+1}\,.
\ee
Next, we split the proof in a series of claims.

$\bullet$ \emph{The function $h_d$ is positive on $(0,1/(d+2)]$ and negative on $[1/d,+\infty)$}. Let us prove this claim. With $n=d$ and $n=d-2$, we deduce from~\eqref{square} and~\eqref{ipp} that
\[
h_d>\frac{1-(d+2)\,D}{1+\alpha}\,j_{d+1}\quad\mbox{and}\quad h_d<\frac{1-d\,D}{1+\alpha}\,j_{d-1}\,.
\]
As a consequence, if $h_d(D)=0$, then $D\in(1/(d+2),1/d)$.

$\bullet$ \emph{If $\alpha\le1$, then $h_d(D)=0$ has a unique solution}. By a direct computation, we observe that
\[
4\,D^2\,h_d'=\alpha\,h_{d+4}+2\,(1-\alpha)\,h_{d+2}
\]
using~\eqref{ipp} with $n=d+2$. If $\alpha\in(0,1)$, it follows that $h_d'<0$ on $[1/(d+2),+\infty)$, which proves the claim.

$\bullet$ \emph{If $\alpha>1$ and $h_d'(D_\circ)=0$ for some $D_\circ\in(1/(d+2),1/d)$, then $h_d(D_\circ)>0$}. Indeed, using
\[
4\,D^2\,h_d'=-\,\alpha\,j_{d+7}+(3\,\alpha-2)\,j_{d+5}+\,2\,(1-\alpha)\,j_{d+3}=0\,,
\]
combined with \eqref{ipp} for $n=d+2$ and $n=d$, we find that, at $D=D_\circ$,
\[
h_d(D_\circ)=\frac{(d+2)\,D-1+\alpha\,(1-d\,D)}{\alpha-1+(d+4)\,D\,\alpha}\,j_{d+1}\,.
\]

Collecting our observations concludes the proof. See Fig.~\ref{fig2} for an illustration.
\hfill\ \end{proof}

%%%%%%%%%%%%%%%%%%%%%%%%%%%%%%%%%%%%%%%%%%%%%%%%%%%%%%%%%%%%%%%%%%%%%%
\subsection{The one-dimensional case}

%---------------------------------------------------------------------
\begin{lem}\label{Lem:Elementary} Let us consider a continuous positive function $\psi$ on $\R^+$ such that the function $s\mapsto\psi(s)\,e^{s^2}$ is integrable and define
\[
H(u):=\int_0^{+\infty}\(1-s^2\)\,\psi(s)\,\sinh(s\,u)\,ds\quad\forall\,u\ge0\,.
\]
For any $u>0$, $H''(u)<0$ if $H(u)\le0$. As a consequence, $H$ changes sign at most once on $(0,+\infty)$. 
\end{lem}

As a consequence, there exists a non-zero solution to \eqref{con1} if and only if $\mathcal H'(0)>0$.

%---------------------------------------------------------------------
\begin{proof} We first observe that
\be{RelatConcavity}
H''(u)-H(u)=\int_0^{+\infty}\(1-s^2\)\(s^2-1\)\psi(s)\,\sinh(s\,u)\,ds<0\quad\forall\,u>0\,.
\ee
Let $u_*>0$ be such that $H(u_*)=0$. If $H'(u_*)<0$, there is a neighborhood of $(u_*)_+$ such that both $H$ and $H'$ are negative. As a consequence, by continuation, $H'(u)<H'(u_*)<0$ for any $u>u_*$. We also get that $H'(u)<0$ for any $u>u_*$ if $H'(u_*)=0$ because we know that $H''(u_*)<0$. We conclude by observing that $H'(u_*)>0$ would imply $H'(u)>H'(u_*)$ for any $u\in(0,u_*)$, a contradiction with $H(0)=0$.\hfill\ \end{proof}

%---------------------------------------------------------------------
\begin{prop}\label{Prop:D1} Assume that $d=1$ and $\alpha>0$. With the notations of Proposition~\ref{Prop:hd}, Equation~\eqref{con1}, \emph{i.e.}, $\mathcal H(u)=0$, has as a solution $u=u(D)>0$ if and only if $D<D_*$ and $\lim_{D\to(D_*)_-}u(D)=0$.\end{prop}
%---------------------------------------------------------------------

As a consequence, if $d=1$ and $D<D_*$, the there are exactly 3 stationary solutions. This result has already been established in \cite{MR3180036}, and we give a short proof for completion.

\begin{proof} Since $\mathcal H(0)=0$, for any $D\neq D_*$, $h_d(D)$ and $\mathcal H(u)$ have the same sign in a neighborhood of $u=0_+$. Next we notice that
\[
-\,\frac1\alpha\,\mathcal H(u)=\int_0^\infty{\(v^2-1\)v\,e^{-\frac{\phi_\alpha(v)}D}\,e^{\frac{u\,v}D}\,dv}-\int_0^\infty{\(v^2-1\)v\,e^{-\frac{\phi_\alpha(v)}D}\,e^{-\frac{u\,v}D}\,dv}\,.
\]
The second term of the right-hand side converges to $0$ as $u\to\infty$ by the dominated convergence theorem. Concerning the first term, let us notice that $|(v^2-1)\,v|\,e^{-\phi_\alpha(v)/D}$ is bounded on $(0,3)$, so that
\begin{multline*}
\int_0^\infty{\(v^2-1\)v\,e^{-\frac{\phi_\alpha(v)}D}\,e^{\frac{u\,v}D}\,dv}\\
\ge\int_0^1{\(v^2-1\)v\,e^{-\frac{\phi_\alpha(v)}D}\,e^{\frac{u\,v}D}\,dv}+\int_2^3{\(v^2-1\)v\,e^{-\frac{\phi_\alpha(v)}D}\,e^{\frac{u\,v}D}\,dv}\\
\ge -\,C_1\,e^{u/D}+C_2\,e^{2\,u/D}\to+\infty\quad\mbox{as}\quad u\to+\infty
\end{multline*}
for some positive constants $C_1$ and $C_2$. This proves that $\lim_{u\to+\infty}\mathcal H(u)=-\infty$ and shows the existence of at least one positive solution of~\eqref{con1} if $h_d(D)>0$.

The fact that~\eqref{con1} has at most one solution on $(0,+\infty)$ follows from Lemma~\ref{Lem:Elementary} applied with $H(u)=\mathcal H(D\,u)$ and $\psi(v)=2\,\alpha\,v\,e^{-\frac{\phi_\alpha(v)}D}$. Finally, as consequence of the regularity of $H$ and of~\eqref{RelatConcavity}, the solution $u=u(D)$ of~\eqref{con1} is such that $\lim_{D\to(D_*)_-}u(D)=0$.

For $D=D_*$, notice that $\mathcal H'(0)=\mathcal H''(0)=0$, and $\mathcal H'''(0)=\frac{\alpha}{D^2}\irk{(1-v^2)v^3e^{-\frac{\phi_*}{D_*}}}<0$ because 
\[
-\irk{(1-v^2)v^3e^{-\frac{\phi_*}{D_*}}}=\irk{v(1-v^2)^2e^{-\frac{\phi_*}{D_*}}}>0
\]
By the same method as above, we conclude that $\mathcal H(u)$ has no positive solutions.
\hfill\ \end{proof}

%%%%%%%%%%%%%%%%%%%%%%%%%%%%%%%%%%%%%%%%%%%%%%%%%%%%%%%%%%%%%%%%%%%%%%
\subsection{The case of a dimension $d\ge2$}

We extend the result of Proposition~\ref{Prop:D1} to higher dimensions. In radial coordinates such that $s=|v|$ and $v_1=s\,\cos\theta$, with $\theta\in[0,\pi]$,
\[
\mathcal H(u)=\alpha\,\left|\mathbb S^{d-2}\right|\int_0^\pi\int_0^{+\infty}\(1-s^2\)\,s^d\,e^{-\frac{\varphi_\alpha(s)}D}\,\cos\theta\,(\sin\theta)^{d-2}\,e^{\frac{u\,s}D\cos\theta}\,ds\,d\theta
\]
it can also be rewritten as
\[
\mathcal H(u)=2\,\alpha\,\left|\mathbb S^{d-2}\right|\int_0^{\pi/2}\int_0^{+\infty}\(1-s^2\)\,s^d\,e^{-\frac{\varphi_\alpha(s)}D}\,\cos\theta\,(\sin\theta)^{d-2}\,\sinh\(\tfrac{u\,s}D\,\cos\theta\)\,ds\,d\theta\,.
\]
%---------------------------------------------------------------------
\begin{prop}\label{Prop:D2} Assume that $d\ge2$ and $\alpha>0$. With the notations of Proposition~\ref{Prop:hd}, Equation~\eqref{con1}, \emph{i.e.}, $\mathcal H(u)=0$, has as a solution $u=u(D)>0$ if and only if $D<D_*$ and $\lim_{D\to(D_*)_-}u(D)=0$.\end{prop}
%---------------------------------------------------------------------

Qualitatively, the result is the same as in dimension $d=1$: there exists a solution to~\eqref{con1} if and only if $\mathcal H'(0)>0$. See Fig.~\ref{Fig1}.
As a conclusion, for $d\ge 2$, there is a rotation
about the vertical axis of the one-dimensional potential, and we have a continuum of
stationary solutions which can be determined by rotation once one is found. Notice that now Lemma~\ref{Lem:Elementary} does not apply directly. We consider
\be{mathsfh}
\mathsf h(s):=\int_0^{\pi/2}\cos\theta\,(\sin\theta)^{d-2}\,\sinh(s\,\cos\theta)\,d\theta\,.
\ee
We need the lemma below.
%---------------------------------------------------------------------
\begin{lem}\label{Prop2.3tmp} Assume that $d\ge2$. The function $\mathsf h$ defined by~\eqref{mathsfh} is such that $s\mapsto s\,\mathsf h'(s)/\mathsf h(s)$ is monotone increasing on $(0,+\infty)$.\end{lem}
%---------------------------------------------------------------------

\begin{proof} Let $s_1$ and $s_2$ be such that $0<s_1<s_2$ and consider a series expansion. With
\[
P_n:=\int_0^\pi(\cos\theta)^{2n}\,(\sin\theta)^{d-2}\,d\theta\,,
\]
we know that
\begin{align*}
&s_2\,\mathsf h'(s_2)\,\mathsf h(s_1)=\sum_{m=0}^\infty\frac{s_2^{2m+1}}{(2m)!}\,P_{m+1}\;\sum_{n=0}^\infty\frac{s_1^{2n+1}}{(2n+1)!}\,P_{n+1}\,,\\
&s_1\,\mathsf h'(s_1)\,\mathsf h(s_2)=\sum_{m=0}^\infty\frac{s_1^{2m+1}}{(2m)!}\,P_{m+1}\;\sum_{n=0}^\infty\frac{s_2^{2n+1}}{(2n+1)!}\,P_{n+1}\,.
\end{align*}
These series are absolutely converging and we can reindex the difference of the two terms using $i=\min\{m,n\}$ to get
\begin{multline*}
s_2\,\mathsf h'(s_2)\,\mathsf h(s_1)-s_1\,\mathsf h'(s_1)\,\mathsf h(s_2)\\=\sum_{i=0}^\infty\sum_{j=1}^\infty\frac{(s_1\,s_2)^{2i+1}}{(2i+2\,j+1)!\,(2i+1)!}\,P_{i+1}\,P_{j+1}\,\frac{2i+2\,j+1}{2\,(i+j+1)}\(s_2^j-s_1^j\)>0\,.
\end{multline*}
\hfill\ \end{proof}

Now we come back to the proof of  Proposition~\ref{Prop:D2}.

\noindent\emph{Proof of Proposition~\ref{Prop:D2}.} We prove that $\lim_{u\to+\infty}\mathcal H(u)=-\infty$ as in the case $d=1$ by considering the domains defined in the coordinates $(s,\theta)$ by $0\le s\le1$ and $\theta\in[0,\pi/2]$ on the one hand, and $2\le s\le3$ and $0\le\theta\le\theta_*$ for some $\theta_*\in(0,\pi/6)$ on the other hand. 

If $D\ge D_*$, we obtain from $\mathcal H'(0)\le 0$ that
\[
\int_0^1{\(1-s^2\)\,s^{d+1}\,e^{-\frac{\varphi_\alpha(s)}D}\,ds}\le\int_1^\infty{\(s^2-1\)\,s^{d+1}\,e^{-\frac{\varphi_\alpha(s)}D}\,ds}
\]
obviously $\mathsf h'(s)$ is strictly increasing on $(0,\infty)$, which means that for any $u>0$,
\[
\begin{aligned}
\int_0^1{\(1-s^2\)\,s^{d+1}\,e^{-\frac{\varphi_\alpha(s)}D}\mathsf h'\left(\frac{us}{D}\right)\,ds}&<\int_0^1{\(1-s^2\)\,s^{d+1}\,e^{-\frac{\varphi_\alpha(s)}D}\mathsf h'\left(\frac{u}{D}\right)\,ds}\\
&\hspace*{12pt}=\int_1^\infty{\(s^2-1\)\,s^{d+1}\,e^{-\frac{\varphi_\alpha(s)}D}\mathsf h'\left(\frac{u}{D}\right)\,ds}\\
&\hspace*{12pt}<\int_1^\infty{\(s^2-1\)\,s^{d+1}\,e^{-\frac{\varphi_\alpha(s)}D}\mathsf h'\left(\frac{us}{D}\right)\,ds}\\
\end{aligned}
\]
so $\mathcal H'(u)<0$ for any $u>0$, which proves that $\mathcal H(u)$ has no positive solutions when $D\ge D_*$.

For $D<D_*$, the existence of at least one solution $u>0$ of $\mathcal H(u)=0$ follows from Proposition~\ref{Prop:D1}. If there exist $0<u_1<u_2$ such that $\mathcal H(u_1)=\mathcal H(u_2)=0$, then 
\[
\int_0^1{\(1-s^2\)\,s^d\,e^{-\frac{\varphi_\alpha(s)}D}\,\mathsf h(\tilde u_1\,s)\,ds}=\int_1^\infty{\(s^2-1\)\,s^d\,e^{-\frac{\varphi_\alpha(s)}D}\,\mathsf h(\tilde u_1\,s)\,ds}
\]
where $\tilde u_1:=u_1/D<u_2/D=:\tilde u_2$. We deduce from Lemma~\ref{Prop2.3tmp} that the function $s\mapsto\mathsf k(s):=\mathsf h(\tilde u_2\,s)/\mathsf h(\tilde u_1\,s)$ is a monotone increasing function on $(0,+\infty)$. Using $\mathcal H(u_1)=0$, we obtain
\[
\begin{aligned}
\int_0^1{\(1-s^2\)\,s^d\,e^{-\frac{\varphi_\alpha(s)}D}\,\mathsf h(\tilde u_2\,s)\,ds}&=
\int_0^1{\(1-s^2\)\,s^d\,e^{-\frac{\varphi_\alpha(s)}D}\,\mathsf h(\tilde u_1\,s)\,\mathsf k(s)\,ds}\\
&<\int_0^1{\(1-s^2\)\,s^d\,e^{-\frac{\varphi_\alpha(s)}D}\,\mathsf h(\tilde u_1\,s)\,\mathsf k(1)\,ds}\\
&\hspace*{12pt}=\int_1^\infty{\(s^2-1\)\,s^d\,e^{-\frac{\varphi_\alpha(s)}D}\,\mathsf h(\tilde u_1\,s)\,\mathsf k(1)\,ds}\\
&\hspace*{12pt}<\int_1^\infty{\(s^2-1\)\,s^d\,e^{-\frac{\varphi_\alpha(s)}D}\,\mathsf h(\tilde u_1\,s)\,\mathsf k(s)\,ds}\\
&\hspace*{24pt}=\int_1^\infty{\(s^2-1\)\,s^d\,e^{-\frac{\varphi_\alpha(s)}D}\,\mathsf h(\tilde u_2\,s)\,ds}\,,
\end{aligned}
\]
a contradiction with $\mathcal H(u_2)=0$.\hfill\ $\square$

%%%%%%%%%%%%%%%%%%%%%%%%%%%%%%%%%%%%%%%%%%%%%%%%%%%%%%%%%%%%%%%%%%%%%%
\subsection{Classification of the stationary solutions and phase transition}

We learn form the expression of $\mathcal I$ in~\eqref{fisher} that any stationary solution of~\eqref{fl} is of the form $f_\uu$ with $\uu=u\,e_1$ for some $u$ which solves~\eqref{con1} up to an rotation. Since $\mathcal H(0)=0$, $u=0$ is always a solution. According to Propositions~\ref{Prop:D1} and~\ref{Prop:D2}, Equation~\eqref{con1} has a solution $u=u(D)$ if and only if $D>D_*$ where $D_*$ is obtained as the unique positive root of $h_d$ by Proposition~\ref{Prop:hd}.
%---------------------------------------------------------------------
\begin{cor}\label{Cor:classification} Let $d\ge 2$ and $\alpha>0$. With the above notations and $D_*$ defined as in Proposition~\ref{Prop:hd}, we know that
\begin{enumerate}
\item[(i)] if $D\ge D_*$ there exists one and only one non-negative stationary distribution $f_\uu$ given by $\uu=\uuu$, which is isotropic,
\item[(ii)]if $d=1$ and $D <D_*$ there exists one and only one non-negative isotropic stationary distribution with $\uu=\uuu$, and two stable non-negative, non-symmetric stationary distributions $f_{\uu}$ with $\uu=\pm u(D)$.

\item[(iii)] if $d\ge 2$ and $D<D_*$ there exists one and only one non-negative isotropic stationary distribution with $\uu=\uuu$, and a continuum of stable non-negative non-symmetric stationary distributions $f_\uu$ with $\uu=u(D)\,\mathsf e$ for any $\mathsf e\in\mathbb S^{d-1}$, with the convention that $\mathbb S^0=\{-1,1\}$.
\end{enumerate}
There are no other stationary solutions.\end{cor}
%---------------------------------------------------------------------
In other words, we have obtained the complete classification of the stationary solutions of~\eqref{fl}, which shows that there are two phases of stationary solutions: the isotropic one with $\uu=\uuu$, and only for $D<D_*$, there are the non-isotropic ones with $\uu\neq\uuu$,   which are either the same after the resection about the
vertical axis for $d=1$, or unique up to a rotation for $d\ge 2$. To complete the proof of Theorem~\ref{Thm:Main1}, we have to study the linear stability of these stationary solutions, and the proof can be found in Section \ref{Sec:Thm1.1completed}.

%%%%%%%%%%%%%%%%%%%%%%%%%%%%%%%%%%%%%%%%%%%%%%%%%%%%%%%%%%%%%%%%%%%%%%
\subsection{An important estimate}\label{Sec:Important}

The next result is a technical estimate which is going to play a key role in our analysis.
%---------------------------------------------------------------------
\begin{lem}\label{Lem:Important} Assume that $d\ge1$, $\alpha>0$ and $D>0$.\begin{enumerate}
\item[(i)] In the case $\uu=\uuu$, we have that $\irh{|v|^2\,f_\uuu}>d\,D$ if and only if $D<D_*$.
\item[(ii)] In the case $D\in(0,D_*)$ and $\uu\neq\uuu$, we have that
\[
\irh{\big|(v-\uu)\cdot\uu\big|^2\,f_\uu}<D\,|\uu|^2\,.
\]
\item[(iii)] In the case $d\ge2$ and $D\in(0,D_*)$ and $\uu\neq\uuu$, we have that
\[
\irh{\big|(v-\uu)\cdot\mathbf w\big|^2\,f_\uu}=D\,|\mathbf w|^2\quad\forall\,w\in\R^d\quad\mbox{such that}\quad\uu\cdot\mathbf w=0\,.
\]

\end{enumerate}\end{lem}
%---------------------------------------------------------------------
\begin{proof} Using Definition~\eqref{def:jd}, we observe that $\irh{|v|^2\,f_\uuu}-d\,D$ has the sign of
\[
j_{d+1}-d\,D\,j_{d-1}=\alpha\,\big(j_{d+1}-j_{d+3}\big)=\alpha\,h_d(D)
\]
by~\eqref{ipp} with $n=d-2$. This proves \emph{(i)} according to Proposition~\ref{Prop:hd} and Corollary~\ref{Cor:classification}.

By integrating $D\,\uu\cdot\nabla\big((\uu\cdot v)\,f_\uu\big)$, we obtain that
\begin{multline*}
0=\irh{\Big(D\,|\uu|^2-(\uu\cdot v)^2\(\alpha\,|v|^2+1-\alpha\)+u\,(\uu\cdot v)\Big)\,f_\uu}\\
=D\,|\uu|^2-
\irh{\big|(v-\uu)\cdot\uu\big|^2\,f_\uu}+D\,|\uu|^2\,\mathcal H'(|\uu|)
\end{multline*}
Then \emph{(ii)} follows from Propositions~\ref{Prop:D1} and~\ref{Prop:D2} because $\mathcal H'(u)<0$ if $u=u(D)=|\uu|$.

With no loss of generality, we can assume that $\uu=(u,0,\ldots0)\neq\uuu$. By integrating $\frac\partial{\partial v_1}f_\uu$ on $\R^d$, we know that $\irh{\(|v|^2-1\)v_1\,f_\uu}=0$. Let us consider radial coordinates such that $s=|v|$ and $v_1=s\,\cos\theta$, with $\theta\in[0,\pi]$. From the integration by parts
\[
(d-1)\,D\int_0^\pi\cos\theta\,(\sin\theta)^{d-2}\,e^{\frac{u\,s}D\cos\theta}\,d\theta=u\,s\int_0^\pi(\sin\theta)^d\,e^{\frac{u\,s}D\cos\theta}\,d\theta\,,
\]
we deduce that $\irh{\(|v|^2-1\)\(1-v_1^2\)\,f_\uu}=0$ because $s^2\,(\sin\theta)^2=1-v_1^2$ and
\[
\irh{\(|v|^2-1\)\,v_i^2\,f_\uu}=0\quad\forall\,i\ge2
\]
by symmetry among the variables $v_2$, $v_3$,\ldots $v_d$. We conclude by integrating $\frac\partial{\partial v_i}f_\uu$ on~$\R^d$ that
\[
\irh{|v_i|^2\,f_\uu}=D\quad\forall\,i\ge2\,,
\]
which concludes the proof of \emph{(iii)}.\hfill\ \end{proof}

%---------------------------------------------------------------------
\begin{cor}\label{Cor:Important} Assume that $d\ge1$, $\alpha>0$ and $\mathsf e\in\mathbb S^{d-1}$. There exists a function $D\mapsto\kappa(D)$ on $(0,D_*)$ which is continuous with values in $(0,1)$ such that, with $\uu=u(D)\,\mathsf e$,
\[
\frac1D\irh{\big|(v-\uu)\cdot\mathbf w\big|^2\,f_\uu}=\kappa(D)\,(\mathbf w\cdot\mathbf e)^2+|\mathbf w|^2-(\mathbf w\cdot\mathbf e)^2\quad\forall\,\mathbf w\in\R^d\,.
\]
\end{cor}
%---------------------------------------------------------------------
With $\kappa(D):=\frac1{u(D)^2}\irh{\big|(v-\uu)\cdot\uu\big|^2\,f_\uu}$ for an arbitrary $\mathsf e\in\mathbb S^{d-1}$, the proof is a straightforward consequence of Lemma~\ref{Lem:Important}.

%%%%%%%%%%%%%%%%%%%%%%%%%%%%%%%%%%%%%%%%%%%%%%%%%%%%%%%%%%%%%%%%%%%%%%
\subsection{The additional result of $u(D)$}\label{Sec:Qualitative result}

The main goal of this subsection is to show the qualitative result of $u(D)$ when $D<D_*$ and $D\to(D_*)_{-}$.
\begin{prop}\label{Prop:D3}
Let $0<D<D_*$. If $u(D)$ denotes the positive solution of $\mathcal H(u)=0$, then

\[
\lim_{D\to D_*}\frac{(u(D))^2}{D_*-D}=\frac{\alpha((1-\alpha)(1-dD_*)-2D_*)}{1-(d+2)D_*}.
\]

\end{prop}
\begin{proof}
According to implicit function theorem, when $D\to (D_*)_{-}$, $u(D)$ is a differentiable function of $D$, and
\[
\frac{\partial \mathcal H}{\partial u}\frac{\partial u}{\partial D}=-\frac{\partial \mathcal H}{\partial D}=-\frac{1}{D^2}\irh{(v_1-u)(\phi_{\alpha}-v_1u)e^{-\frac{1}{D}(\phi_{\alpha}-uv_1)}}.
\]
At $D=D_*$, $\frac{\partial \mathcal H}{\partial u}=0$, and  $\frac{\partial^2 \mathcal H}{\partial u^2}=\frac{1}{D^2}\irh{(1-|v|^2)v_1^3e^{-\frac{\phi_{\alpha}}{D_*}}}=0$, so we obtain that
\[
\frac{\partial \mathcal H}{\partial u}\sim \beta u^2
\]
as $D\to (D_*)_{-}$, where
\[
\beta=\frac{1}{2}\frac{\partial^3 \mathcal H}{\partial u^3}(0)=\frac{1}{2D_*^3}\irh{(1-|v|^2)v_1^4e^{-\frac{\phi_{\alpha}}{D_*}}}<0.
\]
On the other hand, from integrating by parts and the indentity $\irh{v_1^2e^{-\frac{\phi_{\alpha}}{D_*}}}=D_*\irh{e^{-\frac{\phi_\alpha}{D_*}}}$ deduced from Lemma~\ref{Lem:Important}, we obtain
\[
\irh{(v_1-u)(\phi_{\alpha}-v_1u)e^{-\frac{1}{D}(\phi_{\alpha}-uv_1)}}=\frac{1}{4}(\alpha-4)u\irh{v_1^2e^{-\frac{1}{D}(v_1^2\phi_{\alpha}-uv_1)}}
\]
\[
+\frac{1}{4}\left[(2D+1-\alpha-dD+\alpha dD-\alpha D)u+3u^3\right]\irh{e^{-\frac{1}{D}(\phi_{\alpha}-uv_1)}}
\]
so that
\[
\irh{(v_1-u)(\phi_{\alpha}-v_1u)e^{-\frac{1}{D}(\phi_{\alpha}-uv_1)}}\sim\frac{1}{4}((1-\alpha)(1-dD_*)-2D_*)u\irh{e^{-\frac{\phi_{\alpha}}{D_*}}}
\]
as $D\to (D_*)_{-}$. Notice that $(1-\alpha)(1-dD_*)-2D_*<0$ because $\frac{1}{d+2}<D_*<\frac{1}{d}$. 
By using ~\eqref{ipp} and $\irh{(1-|v|^2)v_1^2e^{-\frac{\phi_{\alpha}}{D_*}}}=0$, we obtain that
\[
\frac{\irh{e^{-\frac{\phi_{\alpha}}{D_*}}}}{\irh{(1-|v|^2)v_1^4e^{-\frac{\phi_{\alpha}}{D_*}}}}=\frac{\alpha}{D_*(1-(d+2)D_*)}
\]
which concludes the proof using $\lim_{D\to D_*}\frac{(u(D))^2}{D_*-D}=-2\lim_{D\to D_*}u\frac{\partial u}{\partial D}$.\hfill\end{proof}

%---------------------------------------------------------------------

%%%%%%%%%%%%%%%%%%%%%%%%%%%%%%%%%%%%%%%%%%%%%%%%%%%%%%%%%%%%%%%%%%%%%%
%%%%%%%%%%%%%%%%%%%%%%%%%%%%%%%%%%%%%%%%%%%%%%%%%%%%%%%%%%%%%%%%%%%%%%
\section{The linearized problem: local properties of the stationary solutions}\label{Sec:Linearization}

This section is devoted to the quadratic forms associated with the expansion of the free energy $\mathcal F$ and the Fisher information $\mathcal I$ around the stationary solution $f_\uu$ studied in Section~\ref{Sec:PhaseTransition}. These quadratic forms are defined for a smooth perturbation $g$ of $f_\uu$ such that $\irh{g\,f_\uu}=0$ by
\begin{equation*}
\begin{aligned}
Q_{1,\uu}[g]:&=\lim_{\varepsilon\to 0}\frac2{\varepsilon^2}\,\left(\mathcal F\big[f_\uu(1+\varepsilon\,g)\big]-\mathcal F\big[f_\uu\big]\right)\\
&=D\irh{g^2\,f_\uu}-D^2\,|\vv_g|^2\quad\mbox{where}\;\vv_g:=\frac1D\irh{v\,g\,f_\uu}\,,
\end{aligned}
\end{equation*}
\[
Q_{2,\uu}[g]:=\lim_{\varepsilon\to 0}\frac1{\varepsilon^2}\,\mathcal I\big[f_\uu\,(1+\varepsilon\,g)\big]=D^2\irh{\left|{\nabla g-\vv_g}\right|^2\,f_\uu}\,.
\]

%%%%%%%%%%%%%%%%%%%%%%%%%%%%%%%%%%%%%%%%%%%%%%%%%%%%%%%%%%%%%%%%%%%%%%
\subsection{Stability of the isotropic stationary solution}\label{Sec:StabIso}

The first result is concerned with the linear stability of $\mathcal F$ around $f_\uuu$.
%---------------------------------------------------------------------
\begin{lem}\label{Lem:Stab} On the space of the functions $g\in\mathrm L^2(f_\uuu\,dv)$ such that $\irh{g\,f_\uuu}=0$, $Q_{1,\uuu}$ is a nonnegative (resp.~positive) quadratic form if and only if $D\ge D_*$ (resp.~$D>D_*$). Moreover, for any $D>D_*$, let $\eta(D):=\alpha\,\mathcal C\,|h_d(D)$ for some explicit $\mathcal C=\mathcal C(D)>0$. Then
\be{lowerequivnrm}
Q_{1,\uuu}[g]\ge\eta(D)\,\irh{g^2\,f_\uuu}\quad\forall\,g\in\mathrm L^2(f_\uuu\,dv)\quad\mbox{such that}\quad\irh{g\,f_\uuu}=0\,.
\ee\end{lem}
%---------------------------------------------------------------------
\begin{proof} On one hand, if $D<D_*$, let $\mathsf e\in\mathbb S^{d-1}$. We consider $g(v)=v\cdot\mathsf e$ and, using~\eqref{ipp} with $n=d-2$, compute
\[
Q_{1,\uuu}[g]=D\irh{v_1^2\,f_\uuu}-\(\irh{v_1^2\,f_\uuu}\)^2=\,\mathcal C\int_0^\infty\(d\,D\,s^{d-1}-\,s^{d+1}\)\,e^{-\frac{\varphi_\alpha(s)}D}\,ds
\]
where the last equality determines the value of $\mathcal C$. This proves that $Q_{1,\uuu}[g]=-\,\alpha\,\mathcal C\,h_d(D)<0$. So the necessary condition for the linear stability of $f_\uuu$ is $D\ge D_*$. 

On the other hand, let $g$ be a function in $\mathrm L^2(\R^d,f_\uuu\,dv)$ such that $\irh{g^2\,f_\uuu}=\irh{v_1^2\,f_\uuu}$. We can indeed normalize $g$ with no loss of generality. With $v_1=v\cdot\mathsf e$, $\mathsf e\in\mathbb S^{d-1}$ such that $\uu_{g\,f_\uuu}=u\,\mathsf e$ for some $u\in\R$, we know by the Cauchy-Schwarz inequality that
\[
\(\irh{v_1\,g\,f_\uuu}\)^2\le\irh{g^2\,f_\uuu}\irh{v_1^2\,f_\uuu}=\(\irh{v_1^2\,f_\uuu}\)^2=\(\frac1d\irh{|v|^2\,f_\uuu}\)^2\,,
\]
hence
\[
Q_{1,\uuu}[g]\ge D\irh{v_1^2\,f_\uuu}-\(\irh{v_1^2\,f_\uuu}\)^2=-\,\alpha\,\mathcal C\,h_d(D)\,.
\]
This proves the linear stability of $f_\uuu$ if $D>D_*$.\hfill\ \end{proof}

Proposition \ref{Prop:stability} and the classification result of Theorem~\ref{Thm:Main1} is a consequence of Corollary~\ref{Cor:classification} and Lemma~\ref{Lem:Stab}.

%%%%%%%%%%%%%%%%%%%%%%%%%%%%%%%%%%%%%%%%%%%%%%%%%%%%%%%%%%%%%%%%%%%%%%
\subsection{A coercivity result}\label{Sec:Coercivity}

From the result of \cite{P,V},  we start by recalling the \emph{Poincar\'e inequality}
\be{Poincare}
\irh{|\nabla h|^2\,f_\uu}\ge\Lambda_D\irh{|h|^2\,f_\uu}\quad\forall\,h\in\mathrm H^1\(\R^d,f_\uu\,dv\)\quad\mbox{such that}\quad\irh{h\,f_\uu}=0\,.
\ee
Here $\uu$ is an admissible velocity such that $\uu=\uuu$ if $D\ge D_*$, or $|\uu|=u(D)$ if $D<D_*$, and $\Lambda_D$ denotes the corresponding optimal constant. Since $\varphi_\alpha$ can be seen as a uniformly strictly convex potential perturbed by a bounded perturbation, it follows from the \emph{carr\'e du champ} method and the Holley-Stroock lemma that $\Lambda_D$ is a positive constant (see \cite{BE,H,J} for more details).
Let
\begin{align*}
&\uu[f]=0\quad\mbox{if}\quad D\ge D_*\quad\mbox{or}\quad\uu_f=\uuu\quad\mbox{and}\quad D<D_*\,,\\
&\uu[f]=\frac{u(D)}{|\uu_f|}\,\uu_f\quad\mbox{if}\quad D<D_*\quad\mbox{and}\quad\uu_f\neq\uuu\,.
\end{align*}
Based on~\eqref{Poincare}, we have the following coercivity result.
%---------------------------------------------------------------------
\begin{prop}\label{Prop:Coercivity} Let $d\ge 1$, $\alpha>0$, $D>0$ and $\mathcal C_D=D\,\Lambda_D$ with $\Lambda_D$ as in~\eqref{Poincare}, and $D_*$ as in Corollary \ref{Cor:classification}. Consider a nonnegative distribution function $f\in\mathrm L^1(\R^d)$ with $\irh f=1$, and let $\uu\in\R^d$ be such that either $\uu=\uuu$ for $D>D_*$, or $\uu\ne\uuu, |\uu|=u(D)$ if $D<D_*$. We assume that $g:=(f-f_\uu)/f_\uu\in\mathrm H^1\(\R^d,f_\uu\,dv\)$. 
\begin{enumerate}
\item[(i)] If $D>D_*$, then

\[
Q_{2,\uu}[g]\ge\mathcal P_D\,Q_{1,\uu}[g],\quad\mbox{where}\quad \mathcal P(D):=\mathcal C(D)\left(1-\frac{\int_{\mathbb R^d}|v|^2f_{\uuu}}{dD}\right).
\]

\item[(ii)]  If $0<D<D_*$, then

\[
Q_{2,\uu}[g]\ge\mathcal C_D\,\big(1-\kappa(D)\big)\,\frac{(\vv_g\cdot\uu)^2}{|\vv_g|^2\,|\uu|^2}\,Q_{1,\uu}[g]
\]

with $\vv_g:=\frac1D\irh{(v-\uu)\,g\,f_\uu}$ and $\kappa(D)<1$ defined as in Corollary~\ref{Cor:Important}.\par As a special case, if $\uu=\uu[f]$, then $Q_{2,\uu}[g]\ge\mathcal C_D\,\big(1-\kappa(D)\big)\,Q_{1,\uu}[g]$.

\end{enumerate}
\end{prop}
%---------------------------------------------------------------------

By construction, $\vv_g$ is such that $D\,\vv_g=\irh{(v-\uu)\,g\,f_\uu}=\irh{v\,g\,f_\uu}=\uu_f-\uu$ because $\irh{g\,f_\uu}=0$.
\begin{proof} Let us apply~\eqref{Poincare} to $h(v)=g(v)-(v-\uu)\cdot\vv_g$. Using $\vv_g=\frac1D\irh{(v-\uu)\,g\,f_\uu}$ 
and $\irh{g\,f_\uu}=0$, we obtain
\begin{multline*}
\frac1{D^2}\,Q_{2,\uu}[g]=\irh{|\nabla g-\vv_g|^2\,f_\uu}\\
\ge\Lambda_D\irh{\(g^2+|\vv_g\cdot(v-\uu)|^2-2\,\vv_g\cdot (v-\uu)\,g\)f_\uu}\\
=\Lambda_D\left[\irh{|g|^2\,f_\uu}+\irh{|\vv_g\cdot(v-\uu)|^2\,f_\uu}-2\,D\,|\vv_g|^2\right]\,.
\end{multline*}

If $D>D_*$ and $\uu=\uuu$, either $\vv_g=\uuu$ and the result is proved, or we know that $\frac1d\irh{|v|^2\,f_\uuu}< D$ by Lemma~\ref{Lem:Important}. We obtain from part (i) of Lemma \ref{Lem:Stab} that
\[
\frac1d\irh{|v|^2\,f_\uuu}\irh{|g|^2\,f_\uuu}\ge |\vv_g|^2
\]
by using the inequality: for $a,b>0, \mu>1$, if $a\ge\mu b$, then
\[
a-(2-1/\mu)b\ge (1-1/\mu)(a-b)
\]
we obtain that
\[
\irh{|g|^2\,f_\uuu}+|\vv_g|^2\(\frac1d\irh{|v|^2\,f_\uuu}-2\,D\)\ge\frac1D\left(1-\frac{\irh{|v|^2\,f_\uuu}}{dD}\right)\,Q_{1,\uuu}[g]\,,
\]
which proves the result.

If $D<D_*$ and $\uu\neq\uuu$, let us apply Corollary~\ref{Cor:Important} with $\mathbf w=\vv_g$ and $\kappa=\kappa(D)$:
\[
\irh{|\vv_g\cdot(v-\uu)|^2\,f_\uu}=\mathcal K\,D\,|\vv_g|^2\quad\mbox{with}\quad\mathcal K=1-(1-\kappa)\,\frac{(\vv_g\cdot\uu)^2}{|\vv_g|^2\,|\uu|^2}\,.
\]
We deduce from the Cauchy-Schwarz inequality
\[
D^2\,|\vv_g|^4=\(\irh{\vv_g\cdot(v-\uu)\,f_\uu}\)^2\le\irh{|g|^2\,f_\uuu}\irh{|\vv_g\cdot(v-\uu)|^2\,f_\uu}
\]
that $D\,|\vv_g|^2\le\mathcal K\irh{|g|^2\,f_\uuu}$. Hence, if $\beta\in(0,1)$, we obtain
\[
\frac1{D^2}\,Q_{2,\uu}[g]-\frac\beta{D^2}\,Q_{2,\uu}[g]\ge\big(1-\beta-(2-\mathcal K-\beta)\,\mathcal K\big)\irh{|g|^2\,f_\uuu}\,.
\]
With $\beta=1-\mathcal K$, we obtain $1-\beta-(2-\mathcal K-\beta)\,\mathcal K=0$, which proves the result.\hfill\ \end{proof}

%%%%%%%%%%%%%%%%%%%%%%%%%%%%%%%%%%%%%%%%%%%%%%%%%%%%%%%%%%%%%%%%%%%%%%
%%%%%%%%%%%%%%%%%%%%%%%%%%%%%%%%%%%%%%%%%%%%%%%%%%%%%%%%%%%%%%%%%%%%%%
\section{Properties of the free energy and consequences}\label{Sec:F}

We consider the free energy $\mathcal F$ and the Fisher information $\mathcal I$ defined respectively by~\eqref{free} and~\eqref{fisher}.
\newpage

%%%%%%%%%%%%%%%%%%%%%%%%%%%%%%%%%%%%%%%%%%%%%%%%%%%%%%%%%%%%%%%%%%%%%%
\subsection{Basic properties of the free energy}\label{Sec:Fbasic}

%---------------------------------------------------------------------
\begin{prop}\label{Prop:Fdecay} Assume that $f_{\rm{in}}$ is a nonnegative function in $\mathrm L^1(\R^d)$ such that $\mathcal F[f_{\rm{in}}]<\infty$. Then there exists a solution $f\in C^0\(\R^+,\mathrm L^1(\R^d)\)$ of~\eqref{fl} with initial datum $f_{\rm{in}}$ such that $\mathcal F[f(t,.)]$ is nonincreasing and a.e.~differentiable on $[0,\infty)$. Furthermore
\[
\frac d{dt}\mathcal F[f(t,.)]\le-\,\mathcal I[f(t,.)]\,,\quad t>0\hspace*{12pt}\mbox{a.e.}
\]\end{prop}
%---------------------------------------------------------------------
This result is classical and we shall skip its proof: see for instance~\cite[Proposition~2.1]{MR2053570} for further details. One of the difficulties in the study of $\mathcal F$ is that in~\eqref{free}, the term~$|\uu_f|^2$ has a negative coefficient, so that the functional $\mathcal F$ is not convex. A smooth solution realizes the equality, and by approximations, we obtain the result.
%---------------------------------------------------------------------
\begin{prop}\label{Prop:Fbound} $\mathcal F$ is bounded from below on the set
\[
\left\{f\in\mathrm L_+^1(\R^d)\,:\,\irh f=1\;\mbox{and}\;\irh{|v|^4\,f}<\infty\right\}
\]
and
\[
\irh{|v|^4\,f}\le\frac1{\alpha^2}\(D+\alpha+\sqrt{(D+\alpha)^2+4\,\alpha\(\mathcal F[f]+\tfrac d2\log(2\pi)\,D\)}\hspace*{4pt}\)^2\,.
\]
\end{prop}
%---------------------------------------------------------------------

\begin{proof} Let $g=f/\mu$ where $\mu(v):=(2\pi)^{-d/2}\,e^{-\frac12\,|v|^2}$ and $d\mu=\mu\,dv$. Since $g\,\log g\ge g-1$ and $\int_{\R^d}(g-1)\,d\mu=0$, we have the classical estimate
\[
\irh{f\,\log f}+\frac12\irh{|v|^2\,f}=\int_{\R^d}g\(\log g-\frac d2\log(2\pi)\)d\mu\ge-\,\frac d2\log(2\pi)\,.
\]
By the Cauchy-Schwarz inequality, $|\uu|^2\le\irh{|v|^2\,f}$ and $\irh{|v|^2\,f}\le\sqrt{\irh{|v|^4\,f}}$, and we deduce that
\[
\mathcal F[f]\ge-\,\frac d2\log(2\pi)\,D+\frac\alpha4\,X^2-\frac{D+\alpha}2\,X\quad\mbox{with}\quad X:=\sqrt{\irh{|v|^4\,f}}\,.
\]
A minimization of the r.h.s.~with respect to $X>0$ shows that $\mathcal F[f]\ge-\frac{(D+\alpha)^2}{4\,\alpha}-\,\frac d2\log(2\pi)\,D$ while the inequality provides the bound on $X$.\hfill\ \end{proof}

%%%%%%%%%%%%%%%%%%%%%%%%%%%%%%%%%%%%%%%%%%%%%%%%%%%%%%%%%%%%%%%%%%%%%%
\subsection{The minimizers of the free energy}\label{Sec:Fminimizers}

%---------------------------------------------------------------------
\begin{cor}\label{Cor:FreeEnergyMinimization} Let $d\ge 1$ and $\alpha>0$. The free energy $\mathcal F$ as defined by~\eqref{free} has a unique nonnegative minimizer with unit mass, $f_\uuu$, if $D\ge D_*$. Otherwise, if $D<D_*$, we have
\[
\min\mathcal F[f]=\mathcal F[f_\uu]<\mathcal F[f_\uuu]
\]
for any $u\in\R^d$ such that $|\uu|=u(D)$. The above minimum is taken on all nonnegative functions in $\mathrm L^1\big(\R^d,(1+|v|^4)\,dv\big)$ such that $\int_{\R^d}f\,dv=1$.\end{cor}
%---------------------------------------------------------------------
\begin{proof} Any minimizing sequence convergence is relatively compact in $\mathrm L^1\big(\R^d,\,dv\big)$ by the Dunford-Pettis theorem, $f\mapsto\uu_f$ is relatively compact and the existence of a minimizer follows by lower semi-continuity.

Next we prove that $\mathcal F[f_{\uu}]<\mathcal F[f_{\uuu}]$ for $0<D<D_{*}$ with the case $\uu=ue_1$. It equals to
\[
D\int_{\mathbb R^d}{f_{\uu}\log\left(\frac{f_{\uu}}{f_{\uuu}}\right)dv}<\frac{u^2}{2}
\]
which is 
\[
\int_{\mathbb R^d}{e^{-\frac{1}{D}(\phi_{\alpha}-uv_1+\frac{u^2}{2})}dv}>\int_{\mathbb R^d}{e^{-\frac{\phi_{\alpha}}{D}}dv}
\]
set function $\Omega(x):=\int_{\mathbb R^d}{e^{-\frac{1}{D}(\phi_{\alpha}-xv_1+\frac{x^2}{2})}dv}$. Then by Proposition \ref{Prop:D2},
\[
\Omega'(x)=\frac{e^{-\frac{x^2}{2D}}}{D}\int_{\mathbb R^d}{(v_1-x)e^{-\frac{1}{D}}(\phi_{\alpha}-xv_1)dv}>0\quad\mbox{for}\quad x\in(0,u)
\]
this means that
\[
\int_{\mathbb R^d}{e^{-\frac{1}{D}(\phi_{\alpha}-uv_1+\frac{u^2}{2})}dv}=\Omega(u)>\Omega(0)=\int_{\mathbb R^d}{e^{-\frac{\phi_{\alpha}}{D}}dv}
\]
which finishes the proof of $\mathcal F[f_{\uu}]<\mathcal F[f_{\uuu}]$.
\hfill\ \end{proof}

%%%%%%%%%%%%%%%%%%%%%%%%%%%%%%%%%%%%%%%%%%%%%%%%%%%%%%%%%%%%%%%%%%%%%%
\subsection{Proof of Theorem~\texorpdfstring{\ref{Thm:Main1}}{1.1}}\label{Sec:Thm1.1completed}

By Corollary~\ref{Cor:FreeEnergyMinimization}, $f_\uuu$ is the unique minimizer if and only if $D\ge D_*$. It is moreover linearly stable by Lemma~\ref{Lem:Stab}. Otherwise $f_\uu$ with $|\uu|=u(D)$ is a minimizer of $\mathcal F$ and it is unique up to a rotation. Combined with the results of Corollary~\ref{Cor:classification}, this completes the proof of Theorem~\ref{Thm:Main1}.\hfill\ $\square$

%%%%%%%%%%%%%%%%%%%%%%%%%%%%%%%%%%%%%%%%%%%%%%%%%%%%%%%%%%%%%%%%%%%%%%
\subsection{Stability of the polarized stationary solution}\label{Sec:StabPol}

Another interesting consequence of Corollary~\ref{Cor:FreeEnergyMinimization} is the linear stability of $\mathcal F$ around $f_\uu$ when $D<D_*$.
%---------------------------------------------------------------------
\begin{lem}\label{Lem:StabPol} Let $D\in(0,D_*)$ and $\uu\in\R^d$ such that $|\uu|=u(D)$. On the space of the functions $g\in\mathrm L^2(f_\uu\,dv)$ such that $\irh{g\,f_\uu}=0$, $Q_{1,\uu}$ is a nonnegative quadratic form.\end{lem}
%---------------------------------------------------------------------

The proof is straightforward as, in the range $D<D_*$, $f_\uuu$ is not a minimizer of $\mathcal F$ and the minimum of $\mathcal F$ is achieved by any $f_\uu$ with $|\uu|=u(D)$. Details are left to the reader.

%%%%%%%%%%%%%%%%%%%%%%%%%%%%%%%%%%%%%%%%%%%%%%%%%%%%%%%%%%%%%%%%%%%%%%
\subsection{An exponential rate of convergence for radially symmetric solutions}

%---------------------------------------------------------------------
\begin{prop}\label{Prop:ExpCV} Let $\alpha>0$, $D>0$ and consider a solution $f\in C^0\(\R^+,\mathrm L^1(\R^d)\)$ of~\eqref{fl} with radially symmetric initial datum $f_{\rm{in}}\in\mathrm L_+^1(\R^d)$ such that $\mathcal F[f_{\rm{in}}]<\infty$. Then~\eqref{ExpCV} holds for some $\lambda>0$.\end{prop}
%---------------------------------------------------------------------
\begin{proof} According to Proposition~\ref{Prop:Fdecay}, we know that
\[
\frac d{dt}\big(\mathcal F[f(t,\cdot)]-\mathcal F[f_\uuu]\big)\le-\,\mathcal I[f(t,\cdot)]
\]
where $\mathcal I$ defined by~\eqref{fisher} and $\uu_f=\uuu$ because the radial symmetry is preserved by the evolution. We have a \emph{logarithmic Sobolev inequality}
\be{LogSob}
\irh{\left|\nabla\log\(\frac f{f_\uuu}\)\right|^2f}\ge\mathcal K_\uuu\irh{f\,\log\(\frac f{f_\uuu}\)}=\mathcal F[f]-\mathcal F[f_\uuu]
\ee
for some constant $\mathcal K_\uuu>0$. This inequality holds for the same reason as for the Poincar\'e inequality~\eqref{Poincare}: since $\varphi_\alpha$ can be seen as a uniformly strictly convex potential perturbed by a bounded perturbation, it follows from the \emph{carr\'e du champ} method and the Holley-Stroock lemma that $\mathcal K_\uuu$ is a positive constant. Hence
\[
\frac d{dt}\big(\mathcal F[f(t,\cdot)]-\mathcal F[f_\uuu]\big)\le-\frac{\mathcal K_\uuu}D\irh{f\,\log\(\frac f{f_\uuu}\)}=-\frac{\mathcal K_\uuu}D\,\big(\mathcal F[f(t,\cdot)]-\mathcal F[f_\uuu]\big)
\]
and we conclude that 
\[
\mathcal F[f(t,\cdot)]-\mathcal F[f_\uuu]\le\big(\mathcal F[f_{\rm{in}}]-\mathcal F[f_\uuu]\big)\,e^{-\lambda\,t}
\]
with $\lambda=\mathcal K_\uuu/D$. The fact that $\mathcal F[f(t,\cdot)]-\mathcal F[f_\uuu]\ge0$ is a consequence of Corollary~\ref{Cor:FreeEnergyMinimization}.\hfill\ \end{proof}

%%%%%%%%%%%%%%%%%%%%%%%%%%%%%%%%%%%%%%%%%%%%%%%%%%%%%%%%%%%%%%%%%%%%%%
\subsection{Continuity and convergence of the norm of the velocity average}\label{sec46}

%---------------------------------------------------------------------
\begin{prop}\label{Prop:uf} Let $\alpha>0$, $D>0$ and consider a solution $f\in C^0\(\R^+,\mathrm L^1(\R^d)\)$ of~\eqref{fl} with initial datum $f_{\rm{in}}\in\mathrm L_+^1(\R^d)$ such that $\mathcal F[f_{\rm{in}}]<\infty$. Then $t\mapsto\uu_f(t)$ is a Lipschitz continuous function on $\R^+$ such that $\lim_{t\to+\infty}\uu_f(t)=\uuu$ if $D\ge D_*$. If $0<D<D_*$, along
any increasing sequence $(n_k)_{k\in\mathbb N}$ of integers, one can extract a subsequence, that we still
denote by $(n_k)_{k\in \mathbb N}$, such that, uniformly in $t\in(0,1)$, we obtain that $\lim_{t\to+\infty}\uu_f(t+n_k)=\uu$ with either $\uu=\uuu$ or $|\uu|=u(D)$ if $D\in(0,D_*)$.\end{prop}
%---------------------------------------------------------------------
\begin{proof} Using~\eqref{fl}, a straightforward computation shows that
\[
\frac{d\uu_f}{dt}=-\,\alpha\irh{v\(|v|^2-1\)f}
\]
Remind from Propositions~\ref{Prop:Fdecay} and~\ref{Prop:Fbound} that $\int_{\mathbb R^d}|v|^4f dv$ is bounded, so the right hand side is bounded by H\"older interpolations . By Proposition~\ref{Prop:Fbound} and H\"older's inequality, we also know that $\uu_f$ is bounded.

We have a \emph{logarithmic Sobolev inequality} analogous to~\eqref{LogSob} if we consider the relative entropy with respect to the \emph{non-equilibrium Gibbs state} $G_f$ defined by~\eqref{Gibbs} instead of the relative entropy with respect to $f_\uuu$: for some constant $\mathcal K>0$,
\[
\irh{\left|\nabla\log\(\frac f{G_f}\)\right|^2f}\ge\mathcal K\irh{f\,\log\(\frac f{G_f}\)}=\mathcal F[f]-\mathcal F[G_f]\,.
\]
By the \emph{Csisz\'ar-Kullback inequality}
\be{CK}
\irh{f\,\log\(\frac f{G_f}\)}\ge\frac14\,\|f-G_f\|^2_{\mathrm L^1(\R^d)}\,,
\ee
we end up with the fact that $\lim_{t\to+\infty}\int_t^{+\infty}\big(\irh{|f-G_f|}\big)^2\,ds=0$. Using H\"older's inequality
\[
\left|\irh{v\,\big(f-G_f\big)}\right|\le\(\irh{|f-G_f|}\)^{3/4}\(\irh{|v|^4\,(f+G_f)}\)^{1/4}
\]
the decay of $\mathcal F[f(t,\cdot)]$ and Proposition~\ref{Prop:Fbound}, we learn that $\lim_{t\to+\infty}\irh{v\,\big(f-G_f\big)}=0$. Let $\mathcal K(u):=\irh{e^{-\frac1D(\phi_\alpha(v)-u\,v_1)}}$. By definition of $\mathcal H$, we have that
\[
\irh{v\,\big(f-G_f\big)}=\uu_f-\irh{v\,G_f}=\irh{(\uu_f-v)\,G_f}=-\,\frac{\mathcal H(u)}{\mathcal K(u)}\,\frac{\uu_f}{|\uu_f|}\quad\mbox{with}\quad u=|\uu_f|\,.
\]
Since $\uu_f$ is bounded, $\mathcal K(u)$ is uniformly bounded by some positive constant and we deduce that
\[
\lim_{t\to+\infty}\mathcal H\big(|\uu_f|\big)=0\,.
\]
\hfill\ \end{proof}

%%%%%%%%%%%%%%%%%%%%%%%%%%%%%%%%%%%%%%%%%%%%%%%%%%%%%%%%%%%%%%%%%%%%%%
%%%%%%%%%%%%%%%%%%%%%%%%%%%%%%%%%%%%%%%%%%%%%%%%%%%%%%%%%%%%%%%%%%%%%%
\section{Large time asymptotic behaviour in the isotropic case}\label{Sec:Isotropic}

In this section, our main goal is to prove Theorem~\ref{Thm:Main2}. In this section, we shall assume that $D>D_*$.

%%%%%%%%%%%%%%%%%%%%%%%%%%%%%%%%%%%%%%%%%%%%%%%%%%%%%%%%%%%%%%%%%%%%%%
\subsection{A non-local scalar product for the linearized evolution operator}

We adapt the strategy of~\cite{MR3196188} to~\eqref{fl}. With $\vv_g=\frac1D\irh{v\,g\,f_\uuu}$ as in Section~\ref{Sec:Linearization},
\be{ScalarProduct}
\scalar{g_1}{g_2}:=D\irh{g_1\,g_2\,f_\uuu}-D^2\,\vv_{g_1}\cdot\vv_{g_2}
\ee
is a scalar product on the space $\mathcal X:=\left\{g\in\mathrm L^2(f_\uuu\,dv)\,:\,\irh{g\,f_\uuu}=0\right\}$ by Lemma~\ref{Lem:Stab} because $\scalar gg=Q_{1,\uuu}[g]$. Let us recall that $f_\uuu$ depends on $D$ and, as a consequence, also $D\,\vv_g$. Equation~\eqref{fl} means
\[
\frac{\partial f}{\partial t}=\nabla\cdot\Big(D\,\nabla f+(v-\uu_f+\nabla\phi_\alpha)\,f\Big)
\]
and $D\,\nabla f_\uuu=-\,(v+\nabla\phi_\alpha)\,f_\uuu$. Hence~\eqref{fl} is rewritten in terms of $f=f_\uuu\,(1+g)$ as
\[
f_\uuu\,\frac{\partial g}{\partial t}=D\,\nabla\cdot\Big((\nabla g-\vv_g)\,f_\uuu-\vv_g\,g\,f_\uuu\Big)
\]
using $\uu_f=D\,\vv_g$, that is,
\be{fln}
\frac{\partial g}{\partial t}=\mathcal L\,g-\vv_g\cdot\Big(D\,\nabla g-\(v+\nabla\phi_\alpha\)g\Big)\quad\mbox{with}\quad\mathcal L\,g=D\,\Delta g-\(v+\nabla\phi_\alpha\)\cdot\(\nabla g-\vv_g\)
\ee
and collect some basic properties of $\mathcal X$ endowed with the scalar product $\scalar\cdot\cdot$ and $\mathcal L$ considered as an operator on $\mathcal X$.
%---------------------------------------------------------------------
\begin{lemma} Assume that $D>D_*$ and $\alpha>0$. Let us consider the scalar product defined by~\eqref{ScalarProduct} on $\mathcal X$. The norm $g\mapsto\sqrt{\scalar gg}$ is equivalent to the standard norm on $\mathrm L^2(f_\uuu\,dv)$ according to
\be{Equivalence}
\eta(D)\irh{g^2\,f_\uuu}\le\scalar gg\le D\irh{g^2\,f_\uuu}\quad\forall\,g\in\mathcal X\,.
\ee
Here $\eta$ is as in~\eqref{lowerequivnrm}. The linearized operator $\mathcal L$ is self-adjoint on $\mathcal X$ with the scalar product defined by~\eqref{ScalarProduct} in the sense that $\scalar{g_1}{\mathcal L\,g_2}=\scalar{\mathcal L\,g_1}{g_2}$ for any $g_1$, $g_2\in\mathcal X$, and such that
\be{Q2}
-\,\scalar g{\mathcal L\,g}=Q_{2,\uuu}[g]\,.
\ee
\end{lemma}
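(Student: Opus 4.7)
For the norm equivalence~\eqref{Equivalence}, observe that by definition $\scalar{g}{g}=Q_{1,\uuu}[g]=D\irh{g^2\,f_\uuu}-D^2\,|\vv_g|^2$. The upper bound is immediate since $-D^2\,|\vv_g|^2\le0$, and the lower bound $\scalar{g}{g}\ge\eta(D)\irh{g^2\,f_\uuu}$ is exactly the coercivity estimate~\eqref{lowerequivnrm} of Lemma~\ref{Lem:Stab}, which holds on $\mathcal X$ as soon as $D>D_*$.

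The starting point for the other two assertions is the divergence-form rewriting
\[
f_\uuu\,\mathcal L\,g = D\,\nabla\cdot\bigl(f_\uuu\,(\nabla g - \vv_g)\bigr)\,,
\]
which follows directly from the definition of $\mathcal L$ in~\eqref{fln}, the identity $D\,\nabla f_\uuu=-(v+\nabla\phi_\alpha)\,f_\uuu$, and the fact that $\vv_g$ is constant in $v$. Two integrations by parts give, respectively,
\[
\irh{g_1\,\mathcal L\,g_2\,f_\uuu} = -\,D\irh{\nabla g_1\cdot(\nabla g_2-\vv_{g_2})\,f_\uuu}
\]
and, by testing the divergence against $v$ component-wise,
\[
D\,\vv_{\mathcal L\,g_2} = -\irh{g_2\,\nabla\phi_\alpha\,f_\uuu}\,.
\]
I will also use the auxiliary identity $\irh{\nabla g\,f_\uuu} = \vv_g+\tfrac1D\irh{g\,\nabla\phi_\alpha\,f_\uuu}$, itself a straightforward integration by parts against $\nabla f_\uuu$.

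Substituting these three ingredients into $\scalar{g_1}{\mathcal L\,g_2}=D\irh{g_1\,\mathcal L\,g_2\,f_\uuu}-D^2\,\vv_{g_1}\cdot\vv_{\mathcal L\,g_2}$ and using the auxiliary identity to rewrite $\vv_{g_2}\cdot\irh{\nabla g_1\,f_\uuu}$, the contributions involving $\irh{g_i\,\nabla\phi_\alpha\,f_\uuu}$ rearrange into the manifestly symmetric expression
\[
\scalar{g_1}{\mathcal L\,g_2}=-\,D^2\irh{\nabla g_1\cdot\nabla g_2\,f_\uuu}+D^2\,\vv_{g_1}\cdot\vv_{g_2}+D\,\vv_{g_1}\cdot\irh{g_2\,\nabla\phi_\alpha\,f_\uuu}+D\,\vv_{g_2}\cdot\irh{g_1\,\nabla\phi_\alpha\,f_\uuu}\,,
\]
from which $\scalar{g_1}{\mathcal L\,g_2}=\scalar{g_2}{\mathcal L\,g_1}$ is immediate. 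Specialising to $g_1=g_2=g$ and separately expanding $Q_{2,\uuu}[g]=D^2\irh{|\nabla g-\vv_g|^2\,f_\uuu}$ via the same auxiliary identity for $\irh{\nabla g\,f_\uuu}$, one checks that both $-\scalar{g}{\mathcal L\,g}$ and $Q_{2,\uuu}[g]$ reduce to $D^2\irh{|\nabla g|^2\,f_\uuu}-D^2|\vv_g|^2-2D\,\vv_g\cdot\irh{g\,\nabla\phi_\alpha\,f_\uuu}$, which yields~\eqref{Q2}. The main subtle point I anticipate is the bookkeeping: $\mathcal L$ is \emph{not} symmetric with respect to the plain $\mathrm L^2(f_\uuu\,dv)$ pairing, and the non-local correction $-D^2\,\vv_{g_1}\cdot\vv_{g_2}$ built into $\scalar{\cdot}{\cdot}$ is precisely what absorbs the asymmetric cross-terms produced by $\vv_{\mathcal L\,g}$; tracking this cancellation cleanly is where the care is required.
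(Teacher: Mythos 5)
Your proof is correct and takes essentially the same route as the paper: both rewrite $\mathcal L$ in the weighted divergence form $f_\uuu\,\mathcal L\,g = D\,\nabla\cdot\big(f_\uuu\,(\nabla g-\vv_g)\big)$, integrate by parts against the weight $f_\uuu$, and track the non-local contribution through $\vv_{\mathcal L\,g}$. The paper stops one step earlier at the compact formula $-\scalar{\mathcal L\,g_1}{g_2}=D^2\irh{(\nabla g_1-\vv_{g_1})\cdot(\nabla g_2-\vv_{g_2})\,f_\uuu}$, which exhibits both the symmetry and~\eqref{Q2} at a glance, whereas your expanded symmetric expression is an equivalent rewriting of the same quantity via the auxiliary identity $\irh{\nabla g\,f_\uuu}=\vv_g+\tfrac1D\irh{g\,\nabla\phi_\alpha\,f_\uuu}$, with~\eqref{Q2} then checked by a parallel expansion of $Q_{2,\uuu}[g]$.
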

%---------------------------------------------------------------------
\begin{proof} Inequality~\eqref{Equivalence} is a straightforward consequence of Definition~\eqref{ScalarProduct} and~\eqref{lowerequivnrm}. The self-adjointness of $\mathcal L$ is a consequence of elementary computations. By starting with
\[
\mathcal L\,g_1=\Big[D\,\Delta g_1-\(v+\nabla\phi_\alpha\)\cdot\nabla g_1\Big]+\(v+\nabla\phi_\alpha\)\cdot\vv_{g_1}\,,
\]
we first observe that $\irh{\left[D\,\Delta g_1-\(v+\nabla\phi_\alpha\)\cdot\nabla g_1\right]\,g_2\,f_\uuu}=-\,D\irh{\nabla g_1\cdot \nabla g_2\,f_\uuu}$ and, as a consequence (take $g_2=v_i$ for some $i=1$, $2$\ldots $d$), $\vv_{\mathcal L\,g_1}=\vv_{g_1}-\irh{\nabla g_1\,f_\uuu}$. Hence
\[
-\,\scalar{\mathcal L\,g_1}{g_2}=D^2\irh{\(\nabla g_1-\vv_{g_1}\)\cdot\(\nabla g_2-\vv_{g_2}\)}\,,
\]
which proves the self-adjointness of $\mathcal L$ and Identity~\eqref{Q2}.\hfill\ \end{proof}

The scalar product $\scalar\cdot\cdot$ is well adapted to the linearized evolution operator in the sense that a solution of the \emph{linearized equation}
\be{lfl}
\frac{\partial g}{\partial t}=\mathcal L\,g
\ee
with initial datum $g_0\in\mathcal X$ is such that
\[
\frac12\,\frac d{dt}Q_{1,\uuu}[g]=\frac12\,\frac d{dt}\scalar gg=\scalar  g{\mathcal L\,g}=-\,Q_{2,\uuu}[g]
\]
and has exponential decay. According to Proposition~\ref{Prop:Coercivity}, we know that
\[
\scalar{g(t,\cdot)}{g(t,\cdot)}\le\scalar{g_0}{g_0}\,e^{-2\,\mathcal P_D\,t}\quad\forall\,t\ge0\,.
\]

%%%%%%%%%%%%%%%%%%%%%%%%%%%%%%%%%%%%%%%%%%%%%%%%%%%%%%%%%%%%%%%%%%%%%%
\subsection{Proof of Theorem~\texorpdfstring{\ref{Thm:Main2}}{1.2}}\label{Proof:Main2}

Let us consider the nonlinear term and prove that a solution $g$ of~\eqref{fln} has the same asymptotic decay rate as a solution of the linearized equation~\eqref{lfl}. By rewriting~\eqref{fln}~as
\[
f_\uuu\,\frac{\partial g}{\partial t}=D\,\nabla\cdot\Big((\nabla g-\vv_g)\,f_\uuu\Big)-D\,\vv_g\cdot\nabla(g\,f_\uuu)
\]
with $f=f_\uuu\,(1+g)$ and using $\irh{g\,f_\uuu}=0$, we find that
\[
\frac12\,\frac d{dt}Q_{1,\uuu}[g]+\,Q_{2,\uuu}[g]=D^2\,\vv_g\cdot\irh{g\,(\nabla g-\vv_g)\,f_\uuu}\,.
\]
Using $\uu_f=D\,\vv_g$, by the Cauchy-Schwarz inequality and~\eqref{lowerequivnrm}, we obtain
\[
\(\irh{g\,(\nabla g-\vv_g)\,f_\uuu}\)^2\le\irh{g^2\,f_\uuu}\irh{|\nabla g-\vv_g|^2\,f_\uuu}\le \frac{Q_{1,\uuu}[g]}{\eta(D)}\,\frac{Q_{2,\uuu}[g]}{D^2}\,.
\]
After taking into account~Proposition~\ref{Prop:Coercivity}, we have
\[
\frac d{dt}Q_{1,\uuu}[g]\le-\,2\(1-|\uu_f(t)|\,\sqrt{\tfrac{\mathcal P_D}{\eta(D)}}\,\)Q_{1,\uuu}[g]\,.
\]
By Proposition~\ref{Prop:uf}, we know that $\lim_{t\to+\infty}|\uu_f(t)|=0$, which proves that
\be{ExpRate}
\limsup_{t\to+\infty}e^{2\,(\mathcal P_D-\varepsilon)\,t}\,Q_{1,\uuu}[g(t,\cdot)]<+\infty
\ee
for any $\varepsilon\in(0,\mathcal P_D)$. After observing that $f\,\log\(f/f_\uuu\)-(f-f_\uuu)\le\frac12\,(f-f_\uuu)^2/f_\uuu$, this completes the proof of Theorem~\ref{Thm:Main2} .\hfill\ $\square$

%%%%%%%%%%%%%%%%%%%%%%%%%%%%%%%%%%%%%%%%%%%%%%%%%%%%%%%%%%%%%%%%%%%%%%
\subsection{A sharp rate of convergence}

We know from Proposition~\ref{Prop:Coercivity} that $Q_{2,\uuu}[g]\ge\mathcal P_D\,Q_{1,\uuu}[g]$ for any $g\in\mathrm H^1\(\R^d,f_\uuu\,dv\)$ such that $\irh{g\,f_\uuu}=0$. At no cost, we can assume that $\mathcal P_D$ is the optimal constant.
%---------------------------------------------------------------------
\begin{thm}\label{Thm:Main2Refined} For any $d\ge1$ and any $\alpha>0$, if $D>D_*$, then the result of Theorem~\ref{Thm:Main2} holds with optimal rate $\lambda=2\,\mathcal P_D$.\end{thm}
%---------------------------------------------------------------------
\begin{proof} We have to prove that~\ref{ExpRate} holds with $\varepsilon=0$. By definition of $\uu_f$, we have that
\[
|\uu_f|^2=\(\irh{v\,(f-f_\uuu)}\)^2\le\irh{g^2\,f_\uuu}\irh{|v|^2\,f_\uuu}
\]
where $g:=(f-f_\uuu)/f_\uuu$. This guarantees that $|\uu_f(t)|\le c\,\sqrt{\eta(D)\,\mathcal P_D}\,e^{-\lambda\,t/2}$. Then the function $y(t):=Q_{1,\uuu}[g(t,\cdot)]$ obeys to the differential inequality
\[
y'\le-\,2\,\mathcal P_D\(1-c\,e^{-\lambda\,t/2}\)\,y
\]
and we deduce as in Section~\ref{Proof:Main2} that $\limsup_{t\to+\infty}e^{2\,\mathcal P_D\,t}\,y(t)$ is finite by a Gr\"onwall estimate. This rate is optimal as shown by using test functions based on perturbations of $f_\uuu$.\hfill\ \end{proof}

%%%%%%%%%%%%%%%%%%%%%%%%%%%%%%%%%%%%%%%%%%%%%%%%%%%%%%%%%%%%%%%%%%%%%%
%%%%%%%%%%%%%%%%%%%%%%%%%%%%%%%%%%%%%%%%%%%%%%%%%%%%%%%%%%%%%%%%%%%%%%
\section{Large time asymptotic behaviour in the polarized case}\label{Sec:Polarized}

In this section, we shall assume that $0<D<D_*$. The situation is more delicate than in the isotropic case $D>D_*$, as several asymptotic behaviours can occur.

%%%%%%%%%%%%%%%%%%%%%%%%%%%%%%%%%%%%%%%%%%%%%%%%%%%%%%%%%%%%%%%%%%%%%%
\subsection{Symmetric and non-symmetric stationary states}

By perturbation of $f_\uuu$, we know that the set of the functions $f$ such that $\mathcal F[f]<\mathcal F[f_\uuu]$ is non-empty. Notice that the minimum of $\mathcal F$ on radial functions is achieved by $f_\uuu$. It follows that any function $f$ such that $\mathcal F[f]<\mathcal F[f_\uuu]$ is non-radial.
%---------------------------------------------------------------------
\begin{lem}\label{Lem:NonRadial}  Let $\alpha>0$, $D\in (0,D_*)$, and consider $f\in C^0\big(\R^+,\mathrm L^1(\R^d)\big)$ as the solution of~\eqref{fl}. Moreover the initial datum $f_{\rm{in}}\ge0$ of mass $1$ satisfies $\mathcal F[f_{\rm{in}}]<\mathcal F[f_\uuu]$. Then $\lim_{t\to+\infty}|\uu_f(t)|=u(D)$ and $\lim_{t\to+\infty}\mathcal F[f(t,\cdot)]=\mathcal F[f_\uu]$ for any $\uu\in\R^d$ such that $|\uu|=u(D)$. Moreover, there exists a sequence $\{t_n\}_{n\in\mathbb N}$ with $\lim_{n\to +\infty} t_n=+\infty$ and some $\uu\in\mathbb R^d$ with $|\uu|=u(D)$ such that
\[
f(t_n+\cdot,\cdot)\longrightarrow f_\uu\quad\mbox{in}\quad\mathrm L^1(\R^+\times\R^d)\quad\mbox{as}\quad n\to+\infty\,.
\]\end{lem}
%---------------------------------------------------------------------
\begin{proof} We reconsider the proof of Proposition~\ref{Prop:uf}. Since $\uu=\uuu$ is forbidden by Proposition~\ref{Prop:Fdecay} and $t\mapsto\uu_f(t)$ is a converging Lipschitz function, there exists a unique limit~$\uu$ such that $|\uu|=u(D)$. The convergence result follows from the \emph{logarithmic Sobolev inequality} and the \emph{Csisz\'ar-Kullback inequality}~\eqref{CK}.\hfill\ \end{proof}

%%%%%%%%%%%%%%%%%%%%%%%%%%%%%%%%%%%%%%%%%%%%%%%%%%%%%%%%%%%%%%%%%%%%%%
\subsection{An exponential rate of convergence for partially symmetric solutions}

Let us start with a simple case, which is to some extent the analogous of the case of Proposition~\ref{Prop:ExpCV} in the polarized case.
%---------------------------------------------------------------------
\begin{prop}\label{Prop:ExpCV2} Let $\alpha>0$, $D\in(0,D_*)$ and consider a solution $f\in C^0\(\R^+,\mathrm L^1(\R^d)\)$ of~\eqref{fl} with initial datum $f_{\rm{in}}\in\mathrm L_+^1(\R^d)$ such that $\mathcal F[f_{\rm{in}}]<\mathcal F[f_\uuu]$ and $\uu_{f_{\rm{in}}}=(u,0\ldots0)$ for some $u\neq0$. We further assume that $f_{\rm{in}}(v_1,v_2,\ldots v_{i-1},v_i,\ldots)=f_{\rm{in}}(v_1,v_2,\ldots v_{i-1},-\,v_i,\ldots)$ for any $i=2$, $3$,\ldots $d$. Then
\[
0\le\mathcal F[f(t,\cdot)]-\mathcal F[f_\uu]\le C\,e^{-\,\lambda\,t}\quad\forall\,t\ge0\,.
\]

 holds with $\lambda=\mathcal C_D\,\big(1-\kappa(D)\big)>0$, with the notations of Proposition~\ref{Prop:Coercivity}.\end{prop}
%---------------------------------------------------------------------

Here we assume that $f_{\rm{in}}(v_1,v_2,\ldots v_{i-1},v_i,\ldots)$ is even with respect to all coordinate of index $i\ge2$, so that $\uu[f]=\uuu$ or  $\uu[f]=(\pm\,u(D),0\ldots0)$ at any time $t\ge0$.
\begin{proof} According to Proposition~\ref{Prop:uf}, we know that $\uu_f$ is continuous. On the other hand, if $\uu_f=\uuu$, then
\[
\mathcal F[f]-\mathcal F[f_\uuu]=\irh{f\,\log\(\frac f{f_\uuu}\)}=\irh{\frac f{f_\uuu}\,\log\(\frac f{f_\uuu}\)\,f_\uuu}\ge X\,\log X_{\big|X=\irh f}=0
\]
by Jensen's inequality, a contradiction with the assumption that $\mathcal F[f_{\rm{in}}]<\mathcal F[f_\uuu]$ and Proposition~\ref{Prop:Fdecay}. Hence $\uu=\uu[f]$ is constant and we can reproduce with $Q_{1,\uu}[n]$ the proof done for  $Q_{1,\uuu}[n]$ in Section~\ref{Sec:Isotropic}.\hfill\ \end{proof}

%%%%%%%%%%%%%%%%%%%%%%%%%%%%%%%%%%%%%%%%%%%%%%%%%%%%%%%%%%%%%%%%%%%%%%
\subsection{Convergence to a polarized stationary state}

To study the rate of convergence towards the stationary solutions $f_\uu$ with $\uu\neq\uuu$ in the range $D\in(0,D_*)$, we face a severe difficulty if $\uu_f$ converges tangentially to the set $u(D)\,\mathbb S^{d-1}$ of admissible velocities for stationary solutions. Otherwise we obtain an exponential rate of convergence as in Theorem~\ref{Thm:Main2}.
%---------------------------------------------------------------------
\begin{prop}\label{Prop:Cv} Assume that $d\ge2$, $\alpha>0$ and $D\in(0,D_*)$. Let us consider a solution~$f$ of~\eqref{fl} with nonnegative initial datum $f_{\rm{in}}$ of mass $1$ such that $\mathcal F[f_{\rm{in}}]<\mathcal F[f_\uuu]$ and assume that $\uu=\lim_{t\to+\infty}\uu_f(t)$ is uniquely defined. If $|(\uu_f-\uu)\cdot\uu|\ge\varepsilon\,u(D)\,|\uu_f-\uu|$ for some $\varepsilon>0$ and $t>0$ large enough, then there are two positive constants $C$, $\lambda$ and some $\uu\in\R^d$ such that
\[
\irh{|f(t,\cdot)-f_\uu|^2\,f_\uu^{-1}}\le C\,e^{-\lambda\,t}.
\]
\end{prop}
The constraint of $\uu_f-\uu$ fits the result of Proposition \ref{Prop:Coercivity}, so that the coercivity constant
has the lower bound.

%---------------------------------------------------------------------
\begin{proof}We adapt the setting of Section~\ref{Proof:Main2} to $g=(f-f_\uu)/f_\uu$ and get that
\[
\frac12\,\frac d{dt}Q_{1,\uu}[g]+\,Q_{2,\uu}[g]=D^2\,\vv_g\cdot\irh{g\,(\nabla g-\vv_g)\,f_\uu}\,.
\]
With $\mathsf Z(t):=\mathcal C_D\,\big(1-\kappa(D)\big)\,\frac{(\vv_g\cdot\uu)^2}{|\vv_g|^2\,|\uu|^2}$, we can rewrite Proposition~\ref{Prop:Coercivity} and the estimate of the nonlinear term as
\[
Q_{2,\uu}[g]\ge\mathsf Z(t)\,Q_{1,\uu}[g]\quad\mbox{and}\quad D^2\,\vv_g\cdot\irh{g\,(\nabla g-\vv_g)\,f_\uu}\le D\,|\vv_g|\,\frac{\sqrt{Q_{1,\uu}[g]\,Q_{2,\uu}[g]}}{\sqrt{\eta(D)}}
\]
By assumption, $\mathsf Z(t)\ge\mathcal C_D\,\big(1-\kappa(D)\big)\,\varepsilon^2$. The conclusion follows as in Section~\ref{Proof:Main2}.\hfill\ \end{proof}
\clearpage

%%%%%%%%%%%%%%%%%%%%%%%%%%%%%%%%%%%%%%%%%%%%%%%%%%%%%%%%%%%%%%%%%%%%%%
%%%%%%%%%%%%%%%%%%%%%%%%%%%%%%%%%%%%%%%%%%%%%%%%%%%%%%%%%%%%%%%%%%%%%%
\appendix\section{Some additional properties of \texorpdfstring{$D_*$}{D0}}\label{Appendix}

In this appendix, we collect some plots which illustrate Section~\ref{Sec:PhaseTransition} and state related qualitative properties of $D_*$.

%---------------------------------------------------------------------
\begin{prop} For any $\alpha>0$ and $d\ge1$, the critical value $D_*=D_*(\alpha,d)$ is monotone decreasing as a function of $d$, such that
\[
\frac1{d+2}<D_*(\alpha,d)<\frac1d
\]
with lower and upper bounds achieved respectively as $\alpha\to0_+$ and $\alpha\to+\infty$.
\end{prop}
%---------------------------------------------------------------------
\begin{proof} The monotonicity with respect to $d$ can be read from
\[
h_d(D)-h_{d+1}(D)=\int_0^\infty{s^{d+1}\(s^2-1\)^2\,e^{-\frac{\phi_\alpha}D}\,ds}>0\,.
\]
The lower bound is a consequence of
\[
\int_0^\infty\(s^{d+1}-s^{d+3}\)e^{-\frac12\,(d+2)\,s^2}\,ds=0\,.
\]
As for the upper bound, for any $D>0$, by considering the derivatives with respect to~$\alpha$ of $j_{d+1}$ and $j_{d-1}$ as defined in~\eqref{def:jd}, we notice that
\[
\frac{j_{d+1}}{j_{d-1}}\sim\frac{2\,j_{d+3}-j_{d+5}}{2\,j_{d+1}-j_{d+3}}\sim\frac{\frac{\alpha+1}\alpha\,j_{d+3}-\frac{d+2}\alpha\,D\,j_{d+1}}{2\,j_{d+1}-j_{d+3}}
\]
by L'H\^opital's rule as $\alpha\to+\infty$. We recall that $j_{d+1}(D)=j_{d+3}(D)$ at $D=D_*$. By letting $\alpha\to+\infty$ with $D=D_*$, we conclude that $j_{d+1}/j_{d-1}\to1$. On the other hand˜ \eqref{ipp} with $n=d-2$ means that $j_{d+1}(D_*)=d\,D_*\,j_{d-1}(D_*)$, from which we conclude that $\lim_{\alpha\to+\infty}D_*(\alpha,d)=1/d$.\hfill\ \end{proof}

\bigskip We conclude this appendix by computations of $D_*$ for specific values of the parameters.\\[4pt]
$\bullet$ If $d=1$, $\alpha=2$, $D_*$ solves $(1-4\,D)\,I_{-1/4}\(\frac1{16\,D}\)+(1+4\,D)\,I_{1/4}\(\frac1{16\,D}\)
+I_{3/4}\(\frac1{16\,D}\)+I_{5/4}\(\frac1{16\,D}\)=0$ where $I_\gamma$ denotes the modified Bessel function of the first kind. Numerically, we find that $D_*\approx 0.529$ matches~\cite[Fig.~1, p.~4]{MR3541988}.\\[4pt]
$\bullet$ If $d=2$, $\alpha=2$, we remind that $D_*\approx 0.354$: see Fig.~\ref{Fig1}.\\[4pt]
$\bullet$ If $d=2$, $\alpha=4$, $D_*\approx0.398$ solves $\(16\,\Gamma\(\frac32,\frac9{16\,D}\)-16\,\sqrt\pi\)D-8\,\Gamma\(1,\frac9{16\,D}\)\sqrt{D}+6\,\sqrt\pi-3\,\Gamma\(\frac12,\frac9{16\,D}\)=0$.\\[4pt]
For further numerical examples, we refer the reader to~\cite{MR3180036,MR3541988}.
\begin{figure}[hb]
\begin{center}
\includegraphics[width=10cm,height=8cm,clip]{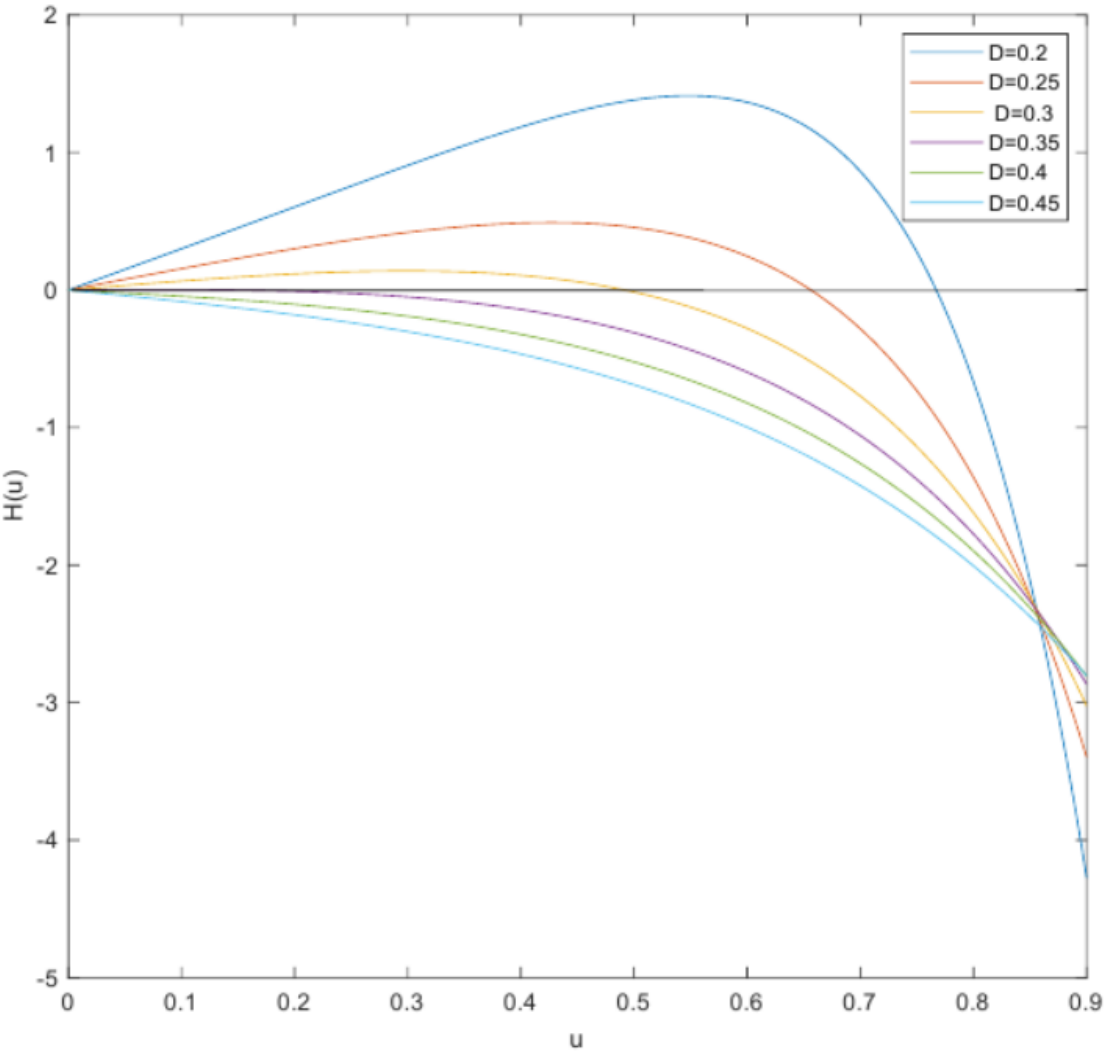}
\caption{\label{Fig1} Plot of $u\mapsto\mathcal H(u)$ when $d=2$, $\alpha=2$, and $D=0.2$, $0.25$, \ldots $0.45$. In this particular case, $D_*\approx 0.354$ solves $\(8\,\operatorname\Gamma\(\frac32,\frac1{8\,D}\)-8\,\sqrt\pi\)D
-\operatorname\Gamma\(\frac12,\frac1{8\,D}\)+2\,\sqrt\pi=0$.}
\end{center}
\end{figure}
\begin{figure}[ht]
\begin{center}
\includegraphics[width=10cm,height=8cm,clip]{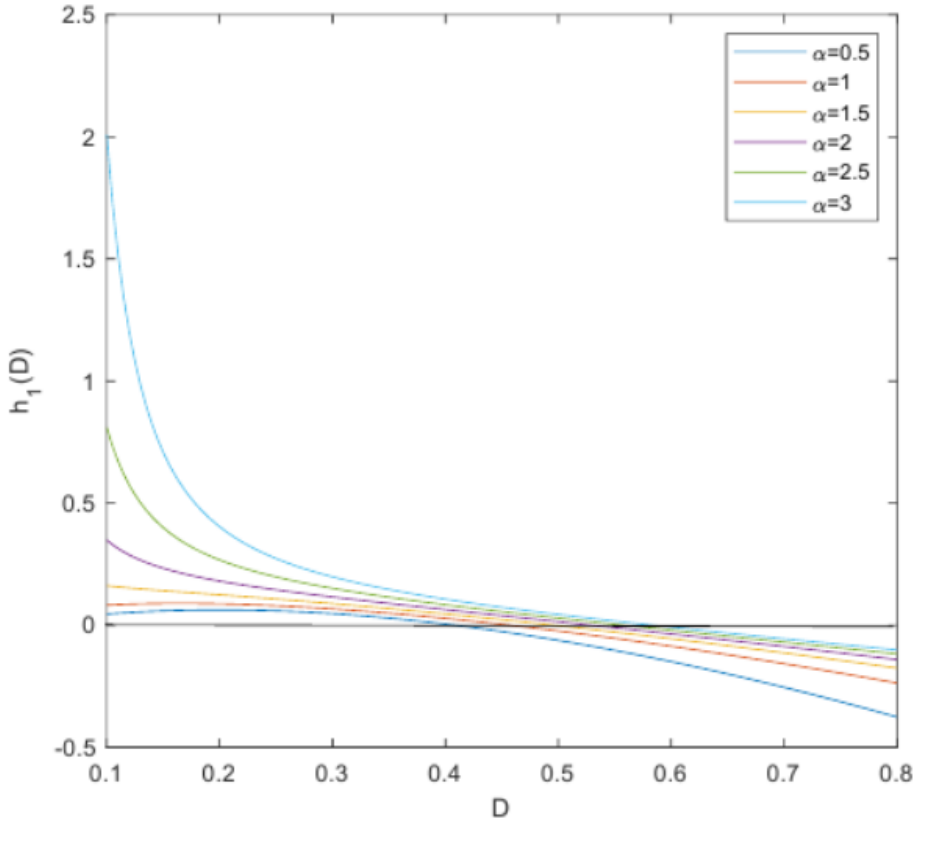}
\caption{\label{fig2} Plot of $h_d(D)$ against $D$ when $d=1$ with $\alpha=0.5$, $1$, \ldots $3.0$.}
\end{center}
\end{figure}
\\

%%%%%%%%%%%%%%%%%%%%%%%%%%%%%%%%%%%%%%%%%%%%%%%%%%%%%%%%%%%%%%%%%%%%%%
%%%%%%%%%%%%%%%%%%%%%%%%%%%%%%%%%%%%%%%%%%%%%%%%%%%%%%%%%%%%%%%%%%%%%%
\bigskip{\bf Acknowledgments} This work has been supported by the Project EFI ANR-17-CE40-0030 of the French National Research Agency. The author thanks J. Tugaut and A.Frouvelle for the comments and suggestions  which have led to significant improvements of the
results.\\
\noindent{\scriptsize\copyright\,2019 by the author. This paper may be reproduced, in its entirety, for non-commercial purposes.}
\newpage
%\bibliographystyle{acm}
%\bibliography{References}

\begin{thebibliography}{10}

\bibitem{BE}
{\sc Bakry, D. and {\'E}mery, M.}
\newblock Diffusions hypercontractives.
\newblock {\em Seminaire de probabilit{\'e}s XIX 1983/84}, 1123(1985), 177--206.

\bibitem{MR3541988}
{\sc Barbaro, A. B.~T., Ca\~{n}izo, J.~A., Carrillo, J.~A., and Degond, P.}
\newblock Phase transitions in a kinetic flocking model of {C}ucker-{S}male
  type.
\newblock {\em Multiscale Model. Simul. 14}, 3 (2016), 1063--1088.

\bibitem{MR3199779}
{\sc Barbaro, A. B.~T., and Degond, P.}
\newblock Phase transition and diffusion among socially interacting
  self-propelled agents.
\newblock {\em Discrete Contin. Dyn. Syst. Ser. B 19}, 5 (2014), 1249--1278.

\bibitem{MR2860672}
{\sc Bolley, F., Ca\~{n}izo, J.~A., and Carrillo, J.~A.}
\newblock Stochastic mean-field limit: non-{L}ipschitz forces and swarming.
\newblock {\em Math. Models Methods Appl. Sci. 21}, 11 (2011), 2179--2210.

\bibitem{MR3109433}
{\sc Bostan, M., and Carrillo, J.~A.}
\newblock Asymptotic fixed-speed reduced dynamics for kinetic equations in
  swarming.
\newblock {\em Math. Models Methods Appl. Sci. 23}, 13 (2013), 2353--2393.

\bibitem{BF}
{\sc Butt{\`a}, P., Flandoli, F., Ottobre, M. and Zegarlinski, B.}
\newblock Asymptotic fixed-speed reduced dynamics for kinetic equations in
  swarming.
\newblock {\em Kinetic $\&$ Related Models}, 4 (2019), 791--827.

\bibitem{MR3196188}
{\sc Campos, J.~F., and Dolbeault, J.}
\newblock Asymptotic estimates for the parabolic-elliptic {K}eller-{S}egel
  model in the plane.
\newblock {\em Comm. Partial Differential Equations 39}, 5 (2014), 806--841.

\bibitem{CFR}
{\sc Carrillo, J. A., Fornasier, M., Rosado, J. and Toscani, G.}
\newblock Asymptotic flocking dynamics for the kinetic Cucker--Smale model.
\newblock {\em SIAM Journal on Mathematical Analysis 42}, 1 (2010), 218--236.


\bibitem{C}
{\sc Carrillo, J. A., Gvalani, R., Pavliotis, G. and Schlichting, A.}
\newblock Long-time behaviour and phase transitions for the McKean--Vlasov equation on the torus.
\newblock {\em Archive for Rational Mechanics and Analysis 235}, 1 (2020), 635--690.


\bibitem{MR2053570}
{\sc Carrillo, J.~A., McCann, R.~J., and Villani, C.}
\newblock Kinetic equilibration rates for granular media and related equations:
  entropy dissipation and mass transportation estimates.
\newblock {\em Rev. Mat. Iberoamericana 19}, 3 (2003), 971--1018.

\bibitem{MR2401690}
{\sc Cucker, F., and Mordecki, E.}
\newblock Flocking in noisy environments.
\newblock {\em J. Math. Pures Appl. (9) 89}, 3 (2008), 278--296.

\bibitem{MR2324245}
{\sc Cucker, F., and Smale, S.}
\newblock Critical dynamics and fluctuations for a mean-field model of cooperative behavior.
\newblock {\em J.Stat. Physics}, 1 (1983), 29--85.

\bibitem{D}
{\sc Dawson, D.A.}
\newblock Emergent behavior in flocks.
\newblock {\em IEEE Trans. Automat. Control 52}, 5 (2007), 852--862.

\bibitem{MR3067586}
{\sc Degond, P., Frouvelle, A., and Liu, J.-G.}
\newblock Macroscopic limits and phase transition in a system of self-propelled
  particles.
\newblock {\em J. Nonlinear Sci. 23}, 3 (2013), 427--456.

\bibitem{MR3305654}
{\sc Degond, P., Frouvelle, A., and Liu, J.-G.}
\newblock Phase transitions, hysteresis, and hyperbolicity for self-organized
  alignment dynamics.
\newblock {\em Arch. Ration. Mech. Anal. 216}, 1 (2015), 63--115.


\bibitem{F}
{\sc Friesen, M. and Kutoviy, O.}
\newblock On stochastic Cucker-Smale flocking dynamics.
\newblock {\em arXiv preprint arXiv:1806.05846 }, 2018.


\bibitem{MR2914250}
{\sc Frouvelle, A., and Liu, J.-G.}
\newblock Dynamics in a kinetic model of oriented particles with phase
  transition.
\newblock {\em SIAM J. Math. Anal. 44}, 2 (2012), 791--826.


\bibitem{G}
{\sc Garnier, J., Papanicolaou, G. and Yang, T.W.}
\newblock Mean field model for collective motion bistability.
\newblock {\em Discrete $\&$ Continuous Dynamical Systems - Series B}, 2 (2019), 851--879.

\bibitem{H}
{\sc Holley, R. and Stroock, Daniel W.}
\newblock Logarithmic Sobolev inequalities and stochastic Ising models.
\newblock {\em J.Statist. Phys}, 46 (1987), 1159--1194.

\bibitem{J}
{\sc J{\"u}ngel, A.}
\newblock Entropy methods for diffusive partial differential equations.
\newblock {\em Springer}, 2016.

\bibitem{P}
{\sc Persson, A.}
\newblock Bounds for the discrete part of the spectrum of a semi-bounded Schr{\"o}dinger operator.
\newblock {\em Math. Scand.}, 8(1960), 143--153.

\bibitem{MR3180036}
{\sc Tugaut, J.}
\newblock Phase transitions of {M}c{K}ean-{V}lasov processes in double-wells
  landscape.
\newblock {\em Stochastics 86}, 2 (2014), 257--284.

\bibitem{PhysRevLett.75.1226}
{\sc Vicsek, T., Czir\'ok, A., Ben-Jacob, E., Cohen, I., and Shochet, O.}
\newblock Novel type of phase transition in a system of self-driven particles.
\newblock {\em Phys. Rev. Lett. 75\/} (Aug 1995), 1226--1229.

\bibitem{V}
{\sc Villani, C.}
\newblock Hypocoercivity.
\newblock {\em American Mathematical Soc.}, 2009.


\end{thebibliography}

\end{document}